\documentclass[12pt]{amsart}
\usepackage[pagebackref,colorlinks=true,linkcolor=blue,citecolor=blue]{hyperref}
\usepackage{amsmath}
\usepackage{bm}
\usepackage{adjustbox,lipsum}
\usepackage{amsfonts}
\usepackage{amssymb}
\usepackage{ mathrsfs }
\usepackage{array}
\usepackage{xfrac}

\usepackage{setspace}
\usepackage{fancyhdr}
\usepackage{tikz}
\usepackage{tikz-cd}
\usepackage{color}
\usepackage{mathtools}
\usepackage{verbatim}
\usepackage{setspace}
\usepackage{stackengine}
\usepackage{stmaryrd}
\stackMath

\usepackage{amsmath,amsthm,amssymb, amscd, tikz, tikz-cd}
\usepackage{graphicx,enumerate,stmaryrd}
\usepackage[all,cmtip]{xy}
\usetikzlibrary{decorations.markings}
\tikzstyle{vertex}=[circle, draw, inner sep=0pt, minimum size=6pt]

\usepackage{comment}
\usepackage[margin=1in]{geometry}
\usepackage{verbatim}

\usepackage[shortlabels]{enumitem}

\pagestyle{fancy}

\newtheorem{thm}{Theorem}[section]
\newtheorem{prop}[thm]{Proposition}

\newtheorem{conj}{Conjecture}[section]
\newtheorem{definition}[thm]{Definition}

\usepackage[utf8]{inputenc}

\title{Parabolic Kazhdan-Laumon and the Kloosterman Fourier Transform for Quadric Cones}
\author{Aaron Slipper}
\email{aaron.slipper@duke.edu}

\fancyhead[R]{}
\fancyhead[L]{}

\usepackage[square,sort,comma,numbers]{natbib}
\usepackage{graphicx}

\setlength{\footskip}{30pt}

\def\G{{\mathbb G}}
\def\S{{\mathcal S}}

\def\Id{{\mathrm {Id}}}
\def\ker{{\mathrm {ker}\,}}

\def\tr{{\mathrm {tr}}}

\def\Spec{{\mathrm {Spec}}}

\def\dim{{\mathrm{dim}}}

\def\GL{{\mathrm{GL}}}

\def\SL{{\mathrm{SL}}}
\def\PSL{{\mathrm{PSL}}}
\def\PGL{{\mathrm{PGL}}}
\def\sl{{\mathfrak{sl}}}

\def\SO{{\mathrm{SO}}}

\def\Sp{{\mathrm{Sp}}}

\def\Mat{{\mathrm{Mat}}}

\def\C{{\mathbb{C}}}
\def\F{{\mathbb{F}}}

\def\A{{\mathbb{A}}}

\def\Q{{\mathbb{Q}}}
\def\P{{\mathbb{P}}}

\def\Ind{{\mathrm{Ind}}}

\def\Std{{\mathrm{Std}}}
\def\Tr{{\mathrm{Tr}}}
\def\op{{\mathrm{op}}}

\def\ab{{\mathrm{ab}}}
\def\Kl{{\mathrm{Kl}}}

\allowdisplaybreaks

\setlength{\parindent}{6ex}

\setcitestyle{square}

\newcommand*{\vertbar}{\rule[-1ex]{0.5pt}{2.5ex}}

\begin{document}

\begin{abstract}
Let $G$ be a (split) reductive group over $\F_q$, and let $M$ be a standard Levi subgroup of $G$. Let $\mathcal{P}$ denote the set of parabolic subgroups of $G$ with Levi factor $M$. For $P$ and $P'$ in $\mathcal{P}$, we let $U = R_u(P)$ (resp., $U' = R_u(P')$) denote the unipotent radical; and we denote by $\overline{G/U}$ (resp., $\overline{G/U'}$) the affinization of the corresponding homogeneous space. Extending the work of Kazhdan-Laumon \cite{KazhLaum} and Braverman-Kazhdan \cite{BKI, BKIV} to general parabolic basic affine (or ``paraspherical") space, we propose a construction for certain intertwining operators 
$\mathcal{F}_{P',P}: \mathcal{S}(\overline{G/U}(\F_q), \C) \to \mathcal{S}(\overline{G/U'}(\F_q), \C)$ for suitable function spaces $\mathcal{S}$, defined via kernels analogous to those appearing in loc cit. We then study the extent to which these intertwiners are normalized. We show that, for opposite $(n-1)+1$ parabolics of $\SL_n$, our transform reduces to the classical linear Fourier transform; and that, for opposite unipotents in $\SL_3$ or opposite Siegel parabolics in $\Sp_4$, our transforms are given by a Fourier transform on a quadric cone (with kernel coming from a Kloosterman sum). We prove Fourier inversion for this transform on (a natural subclass of) functions on the quadric cone, establishing a finite-field analogue of the quadric Fourier transform of Gurevich-Kazhdan, Getz-Hsu-Leslie, and Kobayashi-Mano \cite{GurKazhQuad, GHL, KobMan}. 
\end{abstract}

\maketitle

\tableofcontents

\section{Introduction}

\subsection{The Basic Problem} This paper has two related objectives: 1) to propose a method for constructing a ``parabolic Kazhdan-Laumon" category by explicitly constructing ``normalized intertwiners" for general parabolics over finite fields, and 2) to construct a Fourier transform on quadric cones in even-dimensional affine space space over a finite field and prove the analogue of Fourier inversion. This latter goal can be viewed as an extension of the classical Fourier-Deligne transform on vector spaces to quadric cones. Finally, we show how quadric cone Fourier inversion is predicted by some special cases of our conjectural parabolic Kazhdan-Laumon theory.

Let $G$ be a split reductive group, defined over $k:=\F_q$. Let $T$ be a maximal torus, $P$ be a parabolic subgroup containing $T$, and $M$ a Levi factor, which we view as both a subgroup of $G$ and a quotient of $P$. We have an ``opposite" parabolic $P^{\op}$ such that $P \cap P^{\op} = M$. We let $U = R_u(P)$ be the unipotent radical of $P$, and $U^{\op} = R_u(P^{\op})$.

It is a theorem of Grosshans \cite{GrossUn} that $G/U$ is a strongly quasi-affine variety; i.e.,  the canonical map $G/U \to \Spec\, k[G/U]$ is an open embedding (``quasi-affine") and $k[G/U]$ is finitely generated (``strong"). We let $\overline{G/U}:=\Spec \, k[G/U] $, the ``affine closure" or ``affinization" of $G/U$. This is generally a singular variety, which we shall call paraspherical space.\footnote{
The homogeneous variety $G/U(P)$ is sometimes called ``parabolic basic affine space." However, since this space is neither basic nor affine, we are dissatisfied with this nomenclature. Our preferred terminology is naturally derived from Gelfand's elegant term ``horospherical space" for $G/U(B)$ ($B$ a Borel).}

This paper hopes to offer some first steps in extending the work of Kazhdan-Laumon \cite{KazhLaum} and Bezrukavnikov-Polishchuk \cite{BezPol} to general parabolic subgroups $P$. Working in the spirit of Braverman-Kazhdan \cite{BKI, BKIV} on normalized intertwining operators over \textit{local} fields  -- \cite{ BKIV} considers the ``degenerate" case of Braverman-Kazhdan spaces $G/[P,P]$ --  we hope to establish normalized intertwining operators for the ``full" paraspherical space $G/U(P)$. However, like \cite{KazhLaum} and \cite{BezPol}, will work in the finite-field and \'etale setting.

More concretely, we would like to find transforms

\[\mathcal{F}_{ji} : \mathcal{S}(\overline{G/U_i}(\F_q), \C) \to \mathcal{S}(\overline{G/U_j}(\F_q), \C)\]


\noindent between suitably-defined functions spaces $\mathcal{S}$, intertwining the natural $G \times M$-actions, such that

\begin{equation}\label{Id1}
\mathcal{F}_{ii} = \Id,
\end{equation}


\noindent and

\begin{equation}
\mathcal{F}_{kj} \circ \mathcal{F}_{ji} = \mathcal{F}_{ki}.
\end{equation}


\noindent

For much of this paper we will restrict our attention to constructing the ``longest"\footnote{This particular involution would correspond to the normalized intertwining operator giving the ``longest" element of the Weyl group in the Gelfand-Graev action \cite{GinzKazh}.} intertwiner -- an involutive\footnote{\label{invfoot}
Of course, it is slightly erroneous to speak of $\mathcal{F}$ as involutive, since it is a map between two different spaces. But, as we shall see, these spaces often can be identified with one another, in which case, $\mathcal{F}$ will indeed turn out to be involutive.} Fourier transform

\begin{equation}
\mathcal{F} := \mathcal{F}_{P^{\op}, P}: \mathcal{S}(\overline{G/U(P)}(\F_q), \C) \to \mathcal{S}(\overline{G/U(P^{\op})}(\F_q), \C),
\end{equation}


\noindent generalizing the Fourier-Deligne transform for the case of $G = \SL_2$ (see section \ref{SL_2case} below). Moreover, we would like to upgrade this to a functor between suitable categories of $\overline{\mathbb{Q}}_l$-adic Weil sheaves on $\overline{G/U}$ and $\overline{G/U^{\op}}$, corresponding to the above transformation under Grothendieck's fonctions-faisceaux. 

Indeed, in this paper we motivate and propose a specific cohomological kernel on $\widetilde{G/U(P)} \times_{G/Q} \widetilde{G/U(P)'}$, where $Q$ denotes the subgroup of $G$ generated by $U(P)$ and $U(P')$, and $\widetilde{G/U(P)}$ denotes the \textit{relative} affine closure of $G/U(P)$ over $G/Q$ (see \ref{PP'trnas}):

\vspace{3mm}

\textbf{Conjecture \ref{MainConj}. Parabolic Kazhdan-Laumon.} The restriction of the kernel in (\ref{PP'trnas}) to $G/U(P') \times_{G/Q}G/U(P)$, as $P$ ranges over all parabolics of $G$ containing $M$ (and with Levi quotient isomorphic to $M$), defines a Kazhdan-Laumon gluing datum for $G/U(P)$.

\vspace{3mm}

We conclude this section with some background. In the 1960's, Gelfand and Graev constructed a ``mysterious" action of the Weyl group on the space of differential operators $\mathcal{D}(G/U(B))$, for $B$ a Borel (see \cite{GinzKazh} for a discussion of the history). This action does \textit{not} arise from a geometric action of $W$ on $G/U$, but rather via compositions of Weyl-algebra Fourier transforms associated with simple reflections in the Weyl group. (Remarkably, the Coxeter relations hold for these operators.) Likewise, there is a corresponding $L^2(G/U)$-theory, where the simple reflections correspond to classical symplectic linear Fourier transforms (see \cite{GelfGraev}, Chapter 5, ``The Space of Horospheres" for a discussion of this action over the ad\`eles, and \cite{BKI} for the case of local fields).  Since the set of Borels containing a fixed torus is a torsor under the action of $W$, we may recast this action as a collection of $G \times T$-equivariant intertwiners $\mathcal{D}(G/U(B)) \to \mathcal{D}(G/U(B'))$ rather than as a $G \times T$-equivariant\footnote{Here the $T$-action is twisted by $W$.} action of $W$ on a single such $\mathcal{D}(G/U(B))$.

Kazhdan-Laumon \cite{KazhLaum} and Bezrukavnikov-Polishchuk \cite{BezPol} constructed an analogous $W$-action on a category of perverse sheaves -- though not quite on the usual categories $\textrm{Perv}(G/U(B))$ or $\textrm{Perv}(\overline{G/U(B)})$. Rather, ibid. construct a category ``glued" from $|W|$ copies of the category $\textrm{Perv}(G/U(B))$. These ``glued" categories are connected with Deligne-Lusztig Theory \cite{BravermanPolishchuk1998} and are the subject of much active research \cite{MortonFerguson2025KLCategoryO, MortonFerguson2025PolishchukConj}. 

In the context of $\mathcal{D}$-modules, the analogous ``glued" categories are simply the category of modules over $\mathcal{D}(G/U(B))$, and so produce nothing substantially new \cite{BBP}. However, over finite fields, Kazhdan-Laumon glued categories are usually \textit{not} isomorphic to the category of perverse sheaves on some space; the simple-reflection Fourier-Deligne transforms used to ``glue" the categories of perverse sheaves on $G/U$ do \textit{not} generate involutions on $\textrm{Perv}(G/U)$ and so do not correspond to normalized intertwiners as they do in the case of the algebras $\mathcal{D}(G/U(B))$ or function spaces on $G/U(B)$ over local fields or the ad\`eles. Rather, these Fourier transforms satisfy Hecke-type relations \cite{BezPol}; on certain subcategories of ``monodromic" sheaves in $D^{\flat}(G/U, \overline{\Q}_\ell)$, they are indeed involutive (see \cite{BravermanPolishchuk1998}, Section 2.1.3.).

In this paper, like in \cite{KazhLaum} and \cite{BezPol}, we will concern ourselves $\ell$-adic sheaves (resp., function-spaces on varieties over finite fields). However, rather than working with Kazhdan-Laumon``glued" categories, we will mostly be considering subcategories of $D^{\flat}(\overline{G/U}, \overline{\Q}_\ell)$ (resp., $\textrm{Fun}(\overline{G/U}, \C)$) over which the natural cohomological (resp, integral) kernels we construct will be involutive (i.e., normalized). (Over local fields, the analogous spaces are considered in to \cite{KazhdForm}).

Finally, we mention that \cite{ABBGM2005} and \cite{MortonFerguson2025KLCategoryO} suggest that there is a connection between Braverman-Kazhdan's Fourier transforms (over \textit{local} fields), their finite-field analogues via Kazhdan-Laumon gluing, and sheaves on the semi-infinite flag variety. One may naturally ask about parabolic analogues of this story (e.g., \cite{BezrukavnikovLosev2023JAMS}, Section 8.11). In the folklore, parabolic semi-infinite flag spaces are usually referred to as ``quarter-infinite" flag varieties. This paper can be viewed as a first step towards understanding Fourier-Deligne transforms for quarter-infinite sheaves.

\subsection{The Fourier Transform on a Quadric Cone} The central new result of this paper is a finite field Fourier transform on even quadric cones, which, we will show, agrees with the ``longest" intertwiner in the fundamental cases of opposite Borels in $\SL_3$ and opposite Siegel parabolics in $\Sp_4$. This Fourier transform is a non-linear, quadratic analogue of the Fourier transform for $\A^n$. In particular, we have:

\vspace{3mm}

\textbf{Theorem \ref{KlInv}.} Let $V$ denote affine $n$-space over $\F_q$, and let 

\[
X = \{(v, v^{\vee}) \in V \times V^* : \langle v, v^\vee\rangle = 0\}.
\]

\noindent Let $\psi: \F_q \to \C^{\times}$ be an additive character, and let $\Kl(a) := \sum_{t \in \mathbb{F}_q^{\times}} \psi\left(\frac{a}{t} + t\right)$ denote the usual Kloosterman sum. Next, define

\[
\mathcal{S}(X(\mathbb{F}_q),\C) =  \left\{f \in \text{Fun}(X(\mathbb{F}_q),\C) : \sum_{\lambda \in \mathbb{F}_q^{\times}} f(\lambda x) = 0\right\},
\] 

\noindent and let $K: X \times X \to \mathbb{A}^1 \to \C$ be given by

\[
\left((u, u^{\vee}),(v, v^{\vee})\right) \mapsto \langle u, v^{\vee}\rangle + \langle v, u^{\vee}\rangle \mapsto \Kl\left(\langle u, v^{\vee}\rangle + \langle v, u^{\vee}\rangle\right).
\]

\noindent Then defining the quadric Fourier transform on $\textrm{Fun}(X(\F_q), \C)$ via 

\[
\mathcal{F}(f)(y) = \sum_{x \in X(\mathbb{F}_q)} f(x)K(x,y),
\]

\noindent we have, for all $f \in \mathcal{S}$, 

\[
\mathcal{F}^2(f) = q^{2n}f.
\]

\vspace{3mm}

The analogous construction for local fields is due to Kobayashi-Mano, in the case of Archimedean fields \cite{KobMan}, and by Gurevich-Kazhdan \cite{GurKazhQuad} and Getz-Hsu-Leslie \cite{GHL} for general local fields. In this body of work the Fourier transform on $L^2(\textrm{Quadric Cone})$ is shown to arise from a kernel given by an explicit Bessel distribution; as we shall see, our geometric kernel is given by the Bessel (or Kloosterman) sheaf of Deligne \cite{Del, KatzGauss}. Hence our work can be thought of as a ``geometrization" of ibid.

To obtain Fourier inversion, we must restrict to those functions that lie in the kernel of the averaging operator under the natural scalar $\G_m$-action on the cone. (This is likely related to the singular geometry of the cone at the origin.) In the context of $\SL_3/U(B)$ (which is an example of a quadric cone), this $\G_m$-averaging condition follows from taking traces of Frobenius of objects in the regular-monodromic category of  Braverman-Polishchuk (\cite{BravermanPolishchuk1998}, Section 2.1.3). 

In addition, our finite-field quadric Fourier transform for functions on the $\F_q$-points of cones in \textit{even}-dimensional affine space and functions whose $\G_m$-\textit{average} under the scaling action is 0, offers an interesting complement to the results of Laumon-Letellier \cite{LLII}, who construct a Fourier transform for functions on the $\F_q$-points of the \textit{quotient stack} of a quadric cone under the $\mathbb{G}_m$-scaling action for cones in \textit{odd}-dimensional affine space. 

\subsection{Outline} The rest of this paper is structured as follows:

Section 2 will be concerned with the foundational example of the normalized intertwining operator for $\SL_2$, and general expectations we have about normalized intertwiners, following Braverman-Kazhdan \cite{BKI, BKIV}.


In section 3, we will try to extract certain general ideas from the $\SL_2$ case; this will lead us to introduce the Wang Monoid and the Slipper pairing. 


In section 4, we present our proposed construction for the Fourier transform (i.e., normalized intertwining operator) for opposite parabolics. We present Fourier inversion (i.e., involutivity) as a conjecture. We then extend our transform to all pairs of parabolics $P, P'$. 


The second half of this paper investigates the involutivity of the proposed transforms.


In section 5, we discuss the case or mirabolic subgroups of $\SL_n$. In this case, involutivity follows, without any restrictions, from finite-field Fourier inversion on a vector space.


In section 6, we discuss the central new result of this paper (Theorem \ref{KlInv}): Fourier inversion for functions on quadric cones in an even-dimensional affine space.


In section 7, we examine the case of the six Borels of $\SL_3$, showing that the quadric Fourier transform of section 6 is precisely the ``longest" intertwiner while the ``simple" reflections are indeed built from the $\SL_2$ (i.e., symplectic linear) Fourier transforms.


In section 8, we analyze the Siegel Parabolic of $\Sp_4$. We show that the quadric Fourier transform of section 6 gives us our normalized intertwiner.

\subsection{Acknowledgments} The author would like to thank his advisors, Ng\^o B\`ao Ch\^au and Kazuya Kato, as well as G\'erard Laumon and Victor Ginzburg for their inspiring counsel. He would also like to thank Barry Mazur for helpful suggestions on the manuscript, as well as Jayce Getz, Yiannis Sakellaridis, Zhilin Luo, Kevin Lin, Justin Campbell, Benedict Morrissey, Minh-Tam Trinh, Tom Gannon, Gurbir Dhillon, Calder Morton-Ferguson, Chun-Hsien Hsu, and Nikolay Grantcharov for many helpful conversations. In addition, he would like to thank an anonymous referee who made detailed comments on the manuscript and pointed out the potential connection to quarter-infinite sheaves. The author would also like to offer deep thanks to John Boyle for his indispensable help in patiently computerizing the quadric Fourier transform proposed here. It was John's empirical verifications of (\ref{casesfor}) for $\F_3$ and $\F_5$ that gave the author confidence in the results of Section 6. This material is based upon work partially supported by the National Science Foundation Graduate Research Fellowship under Grant No. 2140001 and by Duke University's Number Theory RTG under DMS-2231514.

\section{The Case of \texorpdfstring{$\text{SL}_2$}{SL2} and General Expectations}\label{SL_2case} 
\subsection{\texorpdfstring{$\SL_2$}{SL2} and the symplectic linear Fourier transform}. For motivation, we now give the primordial example of a ``normalized intertwiner" Fourier transform. In this subsection, let $G= \SL_2$. Then, up to conjugacy, we have only one parabolic, the upper-triangular Borel with it usual unipotent radical $U = U(B) = [B,B]$.

 We observe that there is a well-defined map $G/U \to \A^2$ given by:

\begin{align}\label{A2+}
    \left(\begin{matrix}
        a & b\\
        c & d
    \end{matrix}\right)\left(\begin{matrix}
        1 & *\\
        0 & 1
    \end{matrix}\right) \mapsto \left(\begin{matrix}
        a\\
        c
    \end{matrix}\right).
\end{align}


\noindent The map is open and isomorphic onto its image, which consists of the set of first columns of matrices in $\SL_2$ -- that is to say, every nonzero vector. Thus $\SL_2/U \cong \A^2 \setminus\{(0,0)\}$. Moreover, as every regular function on $\A^2 \setminus \{ (0,0)\}$ extends canonically to a regular function on $\A^2$ (a computation either verifiable directly or by appeal to the algebraic Hartog 
Lemma\footnote{Indeed, the case of $\A^2\setminus \{ (0,0)\}$ is perhaps the most well-known classical example of Hartog's Lemma.}), we see that

\[
\overline{\SL_2/U(B)} = \overline{\SL_2/[B,B]} \cong \A^2 .
\]


\noindent Observe that, in particular, $\overline{\SL_2/U}$ is smooth -- this is, in fact, quite miraculous; most affinizations of quotients $G/U(P)$ are singular.

Let  $T$ denotes the standard torus in $\SL_2$, and let $B^{\op}$ (resp. $U^{\op}$) denote the lower triangular Borel (resp., its unipotent radical).

We have a map $G/U^{\op} \to \A^2$ via a map which we shall write, somewhat suggestively, as:

\begin{align}\label{A2-}
    \left(\begin{matrix}
        a & b\\
        c & d
    \end{matrix}\right)\left(\begin{matrix}
        1 & 0\\
        * & 1
    \end{matrix}\right) \mapsto \left(\begin{matrix}
        1 & 0\\
        * & 1
    \end{matrix}\right)\left(\begin{matrix}
        a & b\\
        c & d
    \end{matrix}\right)^{-1} =  \left(\begin{matrix}
        1 & 0\\
        * & 1
    \end{matrix}\right)\left(\begin{matrix}
        d & -b\\
        -c & a
    \end{matrix}\right) \mapsto \left(\begin{matrix}
        d & -b\\
    \end{matrix}\right).
\end{align}


\noindent And, of course, this exhibits $\SL_2/U^{\op}$ as another $\A^2\setminus \{(0,0)\}$, with affine closure $\A^2$ as before.

Braverman-Kazhdan's Fourier transforms in \cite{BKI, KazhdForm} are built from the observation that there is a $G$-invariant symplectic duality between these two $\A^2$'s. Thus the classical (symplectic) linear Fourier transform can be used to provide normalized intertwiners in this case. 

Let us choose once and for all $\psi: \F_q \to \C$, an additive character. We consider the pairing, defined as the composition: 

\begin{equation}\label{pairings12}
\overline{\SL_2/U}(\F_q) \times \overline{\SL_2/U^{\op}} (\F_q)
\to \A^1(\F_q) \xrightarrow{\psi} \C 
\end{equation}
\[
\left(\left(\begin{matrix}
    a & b \\
    c & d
\end{matrix}\right)\, \text{mod} \,\, U, \left(\begin{matrix}
    a' & b' \\
    c' & d'
\end{matrix}\right)\,\text{mod}\,\, U^{\op}\right)\mapsto ad' - cb' \mapsto \psi(ad' - cb').
\]


\noindent This $\C$-valued pairing provides the integral kernel of the classical (symplectic) Fourier transform:

\begin{equation}\label{SL2Four}
 \mathcal{F}(f)\left[\left(\begin{matrix}
     b\\
     d
 \end{matrix}\right)\right] = q^{-1}\sum_{\left(\begin{matrix}
     a\\
     c
 \end{matrix}\right)\in \A^2(\F_q)} f\left[\left(\begin{matrix}
     a\\
     c
 \end{matrix}\right)\right]\psi(ad - bc).
\end{equation}

Furthermore, this may be upgraded to sheaves in the form of the Fourier-Deligne transform, via ``push-pull" along the diagram: 

\[
\begin{tikzcd}
             & \mathbb{A}^2\times \mathbb{A}^2 \arrow[ld, "\pi_1"'] \arrow[rd, "\pi_2"] \arrow[r, "{\langle -,-\rangle}"] & \mathbb{A}^1 \\
\mathbb{A}^2 &                                                                                                            & \mathbb{A}^2.
\end{tikzcd}
\]


\noindent If $\mathscr{S} \in D^{\flat}(\A^2, \overline{\mathbb{Q}_\ell})$ is  bounded derived $\overline{\mathbb{Q}_\ell}$-adic Weil sheaf, then: 

\begin{equation}\label{FourSheavesCl}
    \mathcal{F}(\mathscr{S}) := {\pi_1}_!\left(\pi_2^*\mathscr{S} \otimes \langle-,-\rangle^*(\mathcal{L}_{\psi})\right)[2](1),
\end{equation}


\noindent where $\mathcal{L}_\psi$ is the Artin-Schreier sheaf associated to $\psi$.

The fact that these transforms are involutive (equivalently, that they define \textit{normalized} intertwiners) is then an immediate consequence of Fourier inversion.\footnote{The fact that $\mathcal{F}^2 = 1$ as opposed to the usual $\mathcal{F}^4 = 1$ as one might recall from the classical Fourier transform is due to the symplectic nature of the pairing $\langle-,-\rangle$. Note also the factor of $q^{-1}$ in (\ref{SL2Four}), and the Tate twist in (\ref{FourSheavesCl}), is calibrated so as to make $\mathcal{F}^2 = \Id$.}

\subsection{Some Expectations}

For the present section we are going to consider only the \textit{longest} intertwiner, $\mathcal{F}_{P^{\op}P}$, for $P$ an arbitrary parabolic with Levi factor $M\subset G$. We expect it to satisfy a few desiderata:


\noindent \textbf{Ansatz 1)}. $\mathcal{F}$ must intertwine the natural $G \times M$ action

\[
\left((g,m)[f]\right)(x) = f(g^{-1}xm)
\]


\noindent on the two function spaces. Observe that this ansatz precludes the transformation from coming from an underlying map of spaces: there is no map $\overline{G/U} \to \overline{G/U^{\op}}$ intertwining the two standard $G \times M$-actions.


Now, there is a natural candidate for a transform satisfying Ansatz 1); what Braverman-Kazhdan call the \textbf{Radon Transform}:

\[\mathcal{R}: \mathcal{S}(G/U(P)\, (\F_q), \C) \to \mathcal{S}(G/U(P^{\op})\, (\F_q), \C).
\]


\noindent It is defined\footnote{Braverman-Kazhdan work only with local fields in \cite{BKI, BKIV}, so everything here is transferred to finite fields by analogy.  Sometimes, for instance, we use language of ``measures", even though over finite fields, these will mean either functions or sheaves.} by

\begin{equation}\label{Radon}
f \mapsto \mathcal{R}(f) := \left\{ x \mapsto \sum_{\overline{u} \in U^{\op}} f(x\overline{u})\right\}.
\end{equation}


This is equivalent to the ``pull-push" of the function $f$ along the roof:\footnote{Note that the Radon transform is defined on the homogeneous spaces $G/U$, \textit{not} their affine closures $\overline{G/U}$. We could, of course, extend it to affine closures by replacing $G$ in the correspondence (\ref{RadCorr}) by the closure of the diagonal embedding of $G$ in $\overline{G/U^{\text{op}}}\times\overline{G/U}$. In this case (\ref{Radon}) would need to be appropriately adjusted.}

\begin{equation}\label{RadCorr}
\begin{tikzcd}
                & G \arrow[ld] \arrow[rd] &     \\
G/U^{\text{op}} &                         & G/U.
\end{tikzcd}
\end{equation}


\noindent Note that the Radon transform manifestly intertwines the $G \times M$ actions (since $M$ normalizes $U$). However, the transform is not involutive; that is to say, in general,

\begin{equation}\label{RR}
\mathcal{R}'\mathcal{R}(f)(x) = \left\{ x \mapsto \sum_{u \in U} \sum_{\overline{u} \in U^{\op}} f(x\overline{u}u)\right\} \ne f(x).
\end{equation}


Indeed, the Gindikin-Karpelevich formula \cite{GindKapI, GindKapII} shows that if the right $M$-orbit of $f$ generates an irreducible representation of $M$, then the LHS of (\ref{RR}) will turn out to equal $f$ times a suitable product of $\gamma$-factors.\footnote{In the finite-field case, the analogue of the $\Gamma$-function is a Gauss sum. See \cite{Carter}, chapter 10, for more information on these kinds of intertwiners over finite fields.}

On the other hand, we are considering paraspherical spaces $G/U(P)$. By analogy with Braverman-Kazhdan's work, we should convolve the Radon transform by a measure on the Levi $M$:

\begin{equation}\label{RadonNorm}
f \mapsto \mathcal{F}(f) := \left\{ x \mapsto \sum_{\overline{u} \in U^{\op}, m \in M} f(xm\overline{u})J(m)\right\}
\end{equation}


\noindent that will kill the $\gamma$-factors. Notice that if such a transform is to intertwine the $M$-action, the function $J$ must be central on $M$; i.e., $M$-conjugacy-invariant.\footnote{For a $M$ a torus, as in the case where $P = B$ is a Borel, this property comes for free; tori are commutative.}

However, proceeding exactly as in (\ref{RadonNorm}) is, as we shall see, unsatisfactory. Rather we shall assume, following (\ref{SL2Four}), that we have:


\noindent \textbf{Ansatz 2)}. The Fourier transform $\mathcal{F}$ takes the form:

\begin{equation}\label{GenFour}
\mathcal{F}(f) (y) = \sum_{x \in \overline{G/U}(\F_q)} f(x) \langle x, y \rangle 
\end{equation}


\noindent for all $y \in \overline{G/U^{\op}}$, where $\langle -, - \rangle: \overline{G/U}(\F_q) \times \overline{G/U^{\op}}(\F_q) \to \C$ is a $\C$-valued pairing between $\overline{G/U}$ and $\overline{G/U^{\op}}$. The pairing $\langle -, - \rangle$ will depend upon choice of an additive character $\psi: \F_q \to \C$. In the case where $G = \SL_2$ and $P$ is the Borel, $\mathcal{F}$ is given by (\ref{SL2Four}). We also expect that $\langle -,- \rangle$ arises as the function (trace of Frobenius) of a natural sheaf on $\overline{G/U} \times \overline{G/U^{\op}}$.


Now we can see why we ought not to proceed directly as in (\ref{RadonNorm}): the (left) action of $TU^{\op}$ on $\SL_2/U$ is not transitive; however, the pairing $\psi\circ \langle -,- \rangle$ in (\ref{SL2Four}) is nonzero everywhere on $\overline{G/U} \times \overline{G/U^{\op}}$. (In particular, it is nonzero on the complement of $yTU^{\op}U$ for any $y$). Hence, even for $G = \SL_2$, we cannot write the symplectic Fourier transform  (\ref{SL2Four}) in the form (\ref{RadonNorm}).

In the context of \cite{BKI, BKIV}, this consideration is not significant, since Braverman-Kazhdan are dealing with integration over a local field $F$. As the complement of the ``big" $TU^{\op}$-orbit has measure 0, a measure defined on the ``big" orbit (via convolving the Haar measure on $U^{\op}$ with some measure on $T$) completely defines a smooth measure on $G/U(F)$. Similarly, Braverman-Kazhdan do not have to contend with the difference between $G/U(F)$ and $\overline{G/U}(F)$, since the complement of the former in the latter has measure 0 over local fields.

Over finite fields, however, the rational points of Zariski-closed subsets have positive measure. Thus there are differences between transforms of type (\ref{RadonNorm}) as opposed to (\ref{GenFour}), and between functions on $G/U$ as opposed to functions on $\overline{G/U}$. Appeal to the $\SL_2$ case shows that we should prefer transforms of type (\ref{GenFour}) and functions on $\overline{G/U}$ as opposed to functions on $G/U$. 

However, we might hope that some vestige of the ``normalized" transform of Braverman-Kazhdan can persist in the finite field setting. In particular:

\noindent \textbf{Ansatz 3)}. If $y \in xUM U^{\op}$, then there should exist a function $M \to \C$ such that, if $y =x u m  \overline{u}$, then $\langle x, y \rangle = J(m)$. The function $J$ should be invariant under (stable\footnote{Here \textit{stable conjugacy} means conjugate over the algebraic closure. I.e., $x$ and $y$ in $M(\F_q)$ are stably conjugate iff there exists $m \in M\left(\overline{\F_q}\right)$ such that $mxm^{-1} = y$. Conjugacy within $M(\F_q)$ will be called \textit{rational conjugacy}. The rationale for insisting upon stable, as opposed to rational conjugacy, also comes from appeal to the work of Braverman-Kazhdan \cite{BKI, BKII}.}) $M$-conjugacy. 


Thus we are insisting that the kernel $\langle -, -\rangle$ restrict to the form (\ref{RadonNorm}) when $x$ and $y$ are in the most ``generic" relative position.


Finally, we record as a last ansatz the most important property that we expect of our Fourier transform. The whole point of normalization is to obtain transforms between function spaces that compose well; in our case (dealing with only $P$ and $P^{\op}$), this means involutivity.


\noindent \textbf{Ansatz 4)}. (Involutivity.\footnote{As noted in footnote \ref{invfoot}, if we identify the spaces $\overline{G/U}$ and $\overline{G/U^{\op}}$ (which are isomorphic as varieties, even if no such isomorphism intertwines the standard $G \times M$ actions), the involutive transform $\mathcal{F}$ is then seen to \textit{square} to one. Recall that the standard Fourier transform has \textit{fourth power} one. So perhaps it would be more apropos to call our transform a \textit{Two}rier as opposed to a \textit{Four}ier transform.})
Swapping the roles of $P$ and $P^{\op}$, we have the transform $\mathcal{F}^{\op}: \mathrm{Fun}(\overline{G/U(P^{\op})}(\F_q), \C) \to \mathrm{Fun}(\overline{G/U(P)}(\F_q), \C)$. Then there exists a natural subspace $\mathcal{S} \subset \mathrm{Fun}(\overline{G/U(P)}(\F_q), \C)$ such that for all $f \in \mathcal{S}$,

\[
\mathcal{F}^{\op} \circ \mathcal{F}(f) = f.
\]



\subsection{\texorpdfstring{$\SL_2$}{SL2} Revisited}

It is straightforward to see that the symplectic Fourier transform (\ref{SL2Four})  accomplishes Ansatz 1) and 2), while Ansatz 4), as has already been pointed out, follows from classical Fourier inversion. We would like to understand how it  accomplishes Ansatz 3). To this end, let us scrutinize the first ``$\F_q$" duality (with values in $\A^1$) between $\overline{\SL_2/U}$ and $\overline{\SL_2/U^{\op}}$ in (\ref{pairings12}).

Observe that we have an isomorphism between the double-unipotent quotient and $\A^1$:

\begin{equation}\label{doubleuinv}
    U^{\op}\backslash SL_2 / U \cong \A^1,
\end{equation}
\[
    \left(\begin{matrix}
        1 & 0\\
        * & 1
    \nonumber \end{matrix}\right) \left(\begin{matrix}
        a & b\\
        c & d
    \nonumber \end{matrix}\right)\left(\begin{matrix}
        1 & *\\
        0 & 1
    \nonumber \end{matrix}\right) \mapsto a.
\]


\noindent In particular, we see that the function $a: \SL_2 \to \A^1$ freely generates all double-unipotent invariant functions on $\SL_2$. And observe that $\A^1 = \Spec\, k[a]$ has a multiplicative monoid structure (under multiplication) such that $(\A^1)^{\times} \cong \mathbb{G}_m \cong T$.

We see, therefore, that we may write the $\A^1$-pairing via

\begin{align*}
    \mathfrak{S}: SL_2 / U \times \SL_2/U^{\op}  &\to U^{\op}\backslash SL_2 / U\cong \A^1;\\
    \nonumber (gU, hU^{\op}) &\mapsto U^{\op} h^{-1} g U,
\end{align*}


\noindent which is manifestly $G$-invariant since $U^{\op}h^{-1}gU = U^{\op}(g'h)^{-1}(g'g)U$.

Indeed: 

\[\left(\begin{matrix}
    a' & b' \\
    c' & d'
\end{matrix}\right)^{-1}\left(\begin{matrix}
    a & b \\
    c & d
\end{matrix}\right) = \left(\begin{matrix}
    ad'-cb' & * \\
    * & *
\end{matrix}\right),
\]


\noindent so that 

\begin{equation}\label{sl2pairing}
\left(\left(\begin{matrix}
    a & b \\
    c & d
\end{matrix}\right)\, \text{mod} \,\, U, \left(\begin{matrix}
    a' & b' \\
    c' & d'
\end{matrix}\right)\,\text{mod}\,\, U^{\op}\right)\mapsto ad' - cb',
\end{equation}


\noindent as desired. Note, furthermore, that if the elements of $G/U(B)$ and $G/U(B')$ come from the same element $g$ of $G$ (i.e., if $a = a',\ldots, d=d'$), then the pairing yields $\det(g) = 1$.

What is the geometric meaning of this pairing? If we are given two elements $gU$ and $hU^{\op}$ of $G/U$ and $G/U^{\op}$, respectively, then, generically, they will pair to an element of $T = (\A^1)^{\times} \subset \A^1$. This will be the unique element $t \in T$ such that we have

\[
\begin{tikzcd}
         & g \arrow[ld, maps to] \arrow[rd, maps to] &                \\
gU &                                           & htU^{\text{op}}
\end{tikzcd}
\]


\noindent under the projections

\[
\begin{tikzcd}
       & G \arrow[ld] \arrow[rd] &                    \\
G/U &                         & G/U^{\text{op}}.
\end{tikzcd}
\]

\noindent Thus saying that the $\C$-valued pairing $\langle -,- \rangle: \overline{G/U} \times \overline{G/U^{\op}} \to \C$ factors through $\mathfrak{S}$ via

\begin{equation}\label{SPairSL_2}
\overline{\SL_2/U} \times \overline{\SL_2/U^{\op}} 
\xrightarrow{\mathfrak{S}} \A^1 \xrightarrow{\psi} \C 
\end{equation}


\noindent is precisely the compromise of a transform like (\ref{GenFour}) with one like (\ref{RadonNorm}): we see that, generically, when $gU$ intersects the $T$-orbit of $hU^{\op}$, the unique $t$ bringing $hU^{op}$ into contact with $gU$ determines $\langle gU, hU^{\op} \rangle = \psi(t): = J(t)$, adopting the notation of Ansatz 3).

Thus letting $\psi \circ \mathfrak{S} := \langle -, - \rangle$ in (\ref{GenFour}) gives us the involutive Fourier transform (\ref{SL2Four}) of Braverman-Kazhdan. This will be our model for constructing the general Fourier transform. 


\section{The Wang Monoid and the Pair of Slipper's}

We now are going to generalize what we have found for $\SL_2$. Note that our ultimate goal is to construct a kernel sheaf on $\overline{G/U} \times \overline{G/U^{\op}}$ such that the corresponding integral transform is involutive. What careful study of the $\SL_2$ example reveals is that, for it to resemble a ``normalization" of the Radon transform, this kernel sheaf should be pulled back from some map of the form:

\[
\overline{G/U} \times \overline{G/U^{\op}} \xrightarrow{\mathfrak{S} } \overline{M}
\]


\noindent where $\overline{M}$ is variety over $\F_q$ ``enlarging" $M$. In this section, we will provide a candidate space $\overline{M}$ and map $\mathfrak{S}$.

First let us turn our attention to the space $\overline{M}$. In the case of $\SL_2$, we saw that it turned out to be $\A^1$, which contains $\mathbb{G}_m = T$ as an open subset. Thus we might imagine that $\overline{M}$ is a reductive monoid whose group of invertible elements is $M$.

\subsection{The Wang Monoid} There is a natural candidate $\overline{M}$, which we will call the Wang monoid as Wang's article \cite{Wang} was the first to isolate and scrutinize it specifically (though see also \cite{ArTi}).  We will offer just one very quick and elegant definition of this monoid which is particularly well-suited to our purposes.\footnote{See also \cite{NgTak} for more on the role of reductive monoids and their relation to Braverman-Kazhdan theory. The particular appearance of the Wang monoid in Braverman-Kazhdan theory appears to be new, however.}

Consider the actions of $U(P)$ and $U(P^{\op})$ on $G$ -- on the right and on the left respectively. These actions induce actions on the ring of functions, $k[G]$. Then we let

\[
\overline{M} := \Spec \, k[G]^{U(P^{\op}) \times U(P)} .
\]


\noindent Wang \cite{Wang} shows that this is an affine, normal, algebraic monoid, with group of units $M$; he also presents its combinatorial description via its Renner Cone.

\subsection{The Slipper Pairing} Now we will turn our attention to the map $\mathfrak{S}$. We will call this map the Slipper Pairing (because of 
the author's surname, and 
the natural predilection for Slippers to come in pairs), and let

\[
\mathfrak{S}: \overline{G/U(P)} \times \overline{G/U(P^{\op})} \to \overline{M}
\]


\noindent be given as follows. We first define the restriction:

\begin{equation}
\mathfrak{S}: G/U(P) \times G/U(P^{\op}) \to \overline{M},
\end{equation}

\[
\left(gU(P), hU(P^{\op})\right) \mapsto U(P^{\op})h^{-1} g U(P),
\]


\noindent and then extend to affine closures. Notice that, generically, $\mathfrak{S}(gU(P), hU(P^{\op})) \in M$, in which case it is the unique element $m \in M$ such that we have 

\[
\begin{tikzcd}
         & g \arrow[ld, maps to] \arrow[rd, maps to] &                \\
gU &                                           & hmU^{\text{op}}
\end{tikzcd}
\]


\noindent under the maps

\[
\begin{tikzcd}
       & G \arrow[ld] \arrow[rd] &                    \\
G/U(P) &                         & G/U(P^{\text{op}}).
\end{tikzcd}
\]

\noindent That is to say, it the unique element of $m \in M$ whose right action on $hU^{\op}$ makes $gU$ and $hmU^{\op}$ ``come from" the same element of $G$. This comports well with the $\SL_2$ case and with Ansatz 3).

\section{The Construction of the Non-Abelian Fourier Transform}

\subsection{Braverman-Kazhdan Normalization} We see that the kernel of our Fourier transform should arise from some sheaf $\mathfrak{J}$ on $\overline{M}$. Let us call corresponding function (given by trace of Frobenius)

\[
J: \overline{M} \to \C,
\]


\noindent provided by choosing some isomorphism $\overline{\mathbb{Q}_\ell} \simeq \C$.

As we have already discussed, if this kernel is to preserve $G \times M$-equivariance of the Fourier transform, $\mathfrak{J}$ must be equivariant under the conjugation action of $M$ on $\overline{M}$. (Recall that $M = \overline{M}^{\times}$ so conjugation by $M$ is well-defined.)

At the level of functions we desire $J: \overline{M} \to \C$ such that: 1) $J$ is $\textrm{Ad}$-$M$ invariant, 2) $J$ depends upon an additive character $\psi$, and 3) when $P =B$, a Borel, we expect that $J$, at least when restricted to $M = T$, should be analogous to those normalizing factors that appear in Braverman-Kazhdan \cite{BKIV} (since, in the case $P= B$, we have $[P,P] = U(P)$). Moreover, we hope that, even in the general case, $J$ will have at least some kinship with the normalizing factors described in \cite{BKIV} for the ``degenerate" case of $G/[P,P]$.

The natural candidate, also arising from the work of Braverman-Kazhdan, comes from the theory of $\gamma$-sheaves for reductive groups \cite{BKIII}. These are designed to yield $\gamma$-factors under convolution, which we expect will kill the $\gamma$-factors arising in the Gindikin-Karpelevich formula. Over local fields, the $\gamma$-distributions \cite{BKI} are still conjectural; for finite fields, on the other hand, the analogous $\gamma$-\textit{sheaves} \cite{BKIII} are completely defined using the theory of perverse sheaves.

\subsection{The Definition of the \texorpdfstring{$\gamma$}{gamma}-sheaf} The BK $\gamma$-sheaf $\Phi_{G, \rho, \psi}$ is a sheaf on $G$, a reductive group, determined by a representation $\rho$ of the Langlands dual group $G^{\vee} \to \GL_n(\overline{\mathbb{Q}_l})$ and an additive character $\psi$. These sheaves are constructed over $\overline{\mathbb{Q}_l}$, so we will pick an isomorphism $\C \cong \overline{\mathbb{Q}_l}$ to translate this into $\C$-valued functions.

$\Phi_{G, \rho, \psi}$ is constructed as follows\footnote{Here we are following Cheng-Ng\^o \cite{ChNg}.}: let $T^{\vee} \subset G^{\vee}$ be a maximal torus. Restricting $\rho$ to $T^{\vee}$ we may diagonalize the representation $\rho$ with respect to the weights of $T^{\vee}$, which gives us a multi-set of characters $\{\lambda_i\}$, $\lambda_i: T^{\vee} \to \mathbb{G}_m$. By duality, these define a multi-set of cocharacters $\lambda_i: \mathbb{G}_m \to T$. We consider the diagram:

\[
\begin{tikzcd}
             & \mathbb{G}_m^n \arrow[rd, "p_{\underline{\boldsymbol{\lambda}}}"] \arrow[ld, "\text{Tr}"'] &   \\
\mathbb{A}^1 &                                                                                            & T
\end{tikzcd}
\]


\noindent where $\text{Tr}$ is given by summing the coordinates in each factor of $\mathbb{G}_m$, and $p_{\underline{\boldsymbol{\lambda}}}$ is given by the product of the maps $\lambda_i: \mathbb{G}_m \to T$. (Note that there are $n = \dim \,\rho$ of the $\lambda_i$, counting each with multiplicity; thus each factor of $\mathbb{G}_m$ corresponds to a $\lambda_i$ and we define $p_{\underline{\boldsymbol{\lambda}}}: (x_1, \ldots , x_n) \mapsto \lambda_1(x_1) \cdots \lambda_n(x_n) \in T$.) Let $\mathcal{L}_\psi$ be the Artin-Schreier sheaf on $\A^1$ associated to 
the additive character $\psi$. We define the sheaf $\Phi_{T, \rho, \phi}$ on $T$ via 

\begin{equation}\label{GammaTorus}
\Phi_{T, \rho, \psi} : = p_{\underline{\boldsymbol{\lambda}}\,!} \text{Tr}^*(\mathcal{L}_\psi) [n]\left(\frac{n}{2}\right).
\end{equation}


\noindent When the $\lambda_i$ satisfy a technical condition called $\sigma$-positivity (which will prevail in the cases of interest to us) this sheaf is a perverse local system over its support (which is the subtorus of $T$ given by the image of $p_{\underline{\boldsymbol{\lambda}}}$). It is sometimes called the Kloosterman sheaf, the hypergeometric sheaf, or the $\gamma$-sheaf for tori. One can construct a $W$-equivariant structure on $\Phi_{T, \rho, \psi}$, twisting the ``obvious" $W$-action by an appropriate sign character (see \cite{ChNg} for details).

Next, one considers the Grothendieck-Springer simultaneous resolution:

\[
\begin{tikzcd}
\widetilde{G} \arrow[d, "\widetilde{q}"'] \arrow[r, "\widetilde{c}"] & T \arrow[d, "q"] \\
G \arrow[r, "c"']                                                    & T\sslash W      
\end{tikzcd}
\]


\noindent where 

\[
\widetilde{G} = \{(g, hB) \in G \times G/B \,:\, h^{-1}gh \in B \},
\]


\noindent $c$ is the Steinberg morphism, $\widetilde{c}: (g, hB) \mapsto \pi(h^{-1} g h)$ (where $\pi:  B \twoheadrightarrow T$ is the canonical projection), $q$ is the obvious projection, and $\widetilde{q}: (g, hB) \mapsto g$. Notice that if we restrict everything to the regular semisimple locus (of both $T$ and $G$), the diagram becomes Cartesian: the upper row is a $W$-torsor over the lower.

Now we consider the sheaf on $G$ given by pull-pushing $\Phi_{T, \rho, \psi}$ along the Grothendieck-Springer fibration:

\[
\Ind_T^G (\Phi_{T, \rho, \psi}) := \widetilde{q}_{\,!}\,\widetilde{c}\,^*(\Phi_{T, \rho, \psi})[d]\left(\frac{d}{2}\right),
\]


\noindent where $d = \dim \,G - \dim \,T$.\footnote{This normalization differs by a Tate twist from \cite{ChNg}.} Because the Grothendieck-Springer map $\widetilde{q}$ is small and proper, we see that $\Ind_T^G (\Phi_{T, \rho, \psi})$ is a perverse sheaf on $G$. Let $j^{\text{rss}}: G^{\text{rss}} \to G$ denote the open inclusion of the regular semisimple locus of $G$. We see that

\[
\Ind_T^G (\Phi_{T, \rho, \psi}) \cong j_{!*}^{\text{rss}}j^{\text{rss}*}\Ind_T^G (\Phi_{T, \rho, \psi}).
\]


\noindent Since $\Phi_{T, \rho, \psi}$ is $W$-equivariant (under $\iota_w$), and $\widetilde{G}^{\text{rss}} \to G^{\text{rss}}$ is a $W$-torsor, $W$ acts on $j^{\text{rss}*}\Ind_T^G (\Phi_{T, \rho, \psi})$. Thus, by functoriality of the intermediate extension, $W$ acts on $\Ind_T^G (\Phi_{T, \rho, \psi})$. 

At last we may define the $\gamma$-sheaf.

\begin{definition}
The $\gamma$-sheaf of $\rho$ on $G$, written $\Phi_{G, \rho, \psi}$, is defined to be the $W$-invariant direct factor of $\Ind_T^G (\Phi_{T, \rho, \psi})$. 
\end{definition} 


We will denote this simply as $\Phi_\rho$ when $G$ and $\psi$ are clear. Observe that 

\[
\Phi_\rho = j_{!*}^{\text{rss}}j^{\text{rss}*}\left(\Phi_\rho\right),
\]


\noindent so we may offer another definition of $\Phi_\rho$ as follows. If we restrict the sheaf $\Phi_{T, \rho, \psi}$ to $T^{\text{rss}}$, we see that it is a $W$-equivariant perverse sheaf on $T$. Thus it descends to a perverse local system on $T^{\text{rss}} / W$, which we shall call $\Phi_\lambda$. Then

\[
\Phi_\rho \cong j_{!*}^{\text{rss}}c^{\text{rss} *}\Phi_\lambda[d]\left(\frac{d}{2}\right).
\]


Thus we see that the $\gamma$-sheaf is, in essence, a Kloosterman sheaf on the maximal torus, extended by conjugacy-invariance to the regular semisimple locus $G^{\text{rss}}$ of $G$, and then extended (via the ``intermediate extension") to $G$. Thus ``generically" the associated function is a convolution of $\psi(t)$'s, like in the normalizing factors of Braverman-Kazhdan's normalized intertwining operators.

\subsection{Construction of the Fourier Transform}\label{BigFour}

Let the representation $\rho$ denote the adjoint representation of $M^{\vee}$ on $\mathfrak{u}_P^{\vee}$, and consider the corresponding $\gamma$-sheaf $\Phi_{M, \mathfrak{u}_P^{\vee}, \psi}$.\footnote{There is a philosophical reason to choose the representation $\mathfrak{u}_P^{\vee}$: in general, there seems to be a natural affinity

\begin{align*}
    G/[P,P] \leftrightsquigarrow (\mathfrak{u}_P^{\vee})^e\\
    G/U(P) \leftrightsquigarrow (\mathfrak{u}_P^{\vee}).
\end{align*}


\noindent where $e$ is the raising operator of an $\sl_2$ triple in $\mathfrak{m}^{\vee}$. See, for instance, the computation of the IC sheaves of the two corresponding Drinfeld compactifications \cite{BFGM, BG, LauEis, KuzSmall}. In \cite{BKIV}, the normalizing measure (on $M^{\ab}$) is given by a convolution of $\gamma$-functions associated to $(\mathfrak{u}_P^{\vee})^e$. The choice of $\mathfrak{u}_P^{\vee}$ is thus natural for our purposes.}

$\Phi_{M, \mathfrak{u}_P^{\vee}, \psi}$ is, of course, merely a sheaf on $M$; we \textit{really} want a sheaf on the Wang monoid $\overline{M}$. If we wish to preserve perversity, there is a canonical way to do this: letting $j: M \to \overline{M}$ be the open inclusion, we define:

\begin{equation}\label{defJ}
\mathfrak{J} : = j_{!*}(\Phi_{M, \mathfrak{u}_P^{\vee}, \psi}).
\end{equation}


Now, at last, we may state our conjecture. 


\textbf{Definition (Sheaf Version)}. Let $G$ be a split semi-simple group\footnote{The case of split reductive groups will be treated in section \ref{GenInt}.}, with parabolic subgroup $P$, Levi factor $M$, and maximal torus $T$. Let $\overline{M}$ be the Wang Monoid of $M$, $\mathfrak{S}$ be the Slipper pairing, and $\mathfrak{J}$ the $\gamma$-sheaf on $\overline{M}$ defined above. Consider the diagram:

\[
\begin{tikzcd}[column sep=15pt, row sep=15pt]
                  &  & \overline{G/U(P^{\text{op}})} \times \overline{G/U(P)} \arrow[llddd, "\pi_1"'] \arrow[rrddd, "\pi_2"]  \arrow[rr, "\mathfrak{S}"] & & \overline{M}\\
                  &  &  &  &    \\
                  &  &  &  &    \\
\overline{G/U(P^{\text{op}})} &  &  &  & \overline{G/U(P)}.
\end{tikzcd}
\]


\noindent We define the transform

\[
\mathcal{F}_{P^{\text{op}}, P} \, : \, D_c^{\flat}\left(\overline{G/U(P)}, \overline{\mathbb{Q}_l}\right) \to D_c^{\flat}\left(\overline{G/U(P^{\text{op}})}, \overline{\mathbb{Q}_l}\right)
\]


\noindent given by 

\begin{equation}\label{TransOp}
    \mathcal{F}(\mathscr{S}) = R\pi_{1\,!} \Bigl( \pi_2^*\left(\mathscr{S}\right) \otimes \left(\mathfrak{S}^{*}\left(\mathfrak{J}\right)[-m]\right) \Bigr)\left(\frac{n}{2}\right)[d],
\end{equation}


\noindent where $d = \dim \, G/U(P)$, $m = \dim \, M$ and $n = \dim \,G/U -\dim\, M = \dim \, U$.

We expect that there is a subcategory of $\mathcal{S} \subset D_c^{\flat}\left(\overline{G/U(P)}, \overline{\Q_\ell}\right)$ on which the transform $\mathcal{F}$  defines an involution:

\[
\mathcal{F}_{P,P^{\text{op}}}\circ\mathcal{F}_{P^{\text{op}}, P} (\mathscr{S}) \cong \mathscr{S}.
\]


\noindent We have also the ``function" version of the above:


\textbf{Definition (Function Version)}. Let $\langle x, y \rangle$, for $(x,y) \in (\overline{G/U} \times \overline{G/U^{\op}})(\F_q)$, denote $(-1)^nq^{-\frac{n}{2}}J(\mathfrak{S}(x,y))$. Then we let:

\[
\mathcal{F}_{P^{\op},P} \,:\, \text{Fun}(\overline{G/U(P)}(\F_q), \C) \to \text{Fun}(\overline{G/U(P^{\op})}(\F_q), \C),
\]


\noindent given by

\begin{equation}\label{FourSlip}
\mathcal{F}_{P^{\op},P}(f) (y) = \sum_{x \in \overline{G/U}} f(x) \langle x, y\rangle.
\end{equation}


\noindent This defines a $G \times M$-equivariant transform of function spaces. We expect that there is a natural class of $\C$-valued functions $\mathcal{S}$ on $\overline{G/U}(\mathbb{F}_q)$ for which $\mathcal{F}$ defines an involution:

\[
\mathcal{F}_{P,P^{\text{op}}}\circ\mathcal{F}_{P^{\text{op}}, P} (f) = f.
\]

\textbf{Question A}. What is the precise description of the category $\mathcal{S}$, and the corresponding constraint on functions? 

\vspace{3mm}

This is a generalization of the regular-monodromic condition of \cite{BravermanPolishchuk1998}. Likewise we might ask:

\vspace{3mm}

\textbf{Question B}. What are the Hecke-type (or braid-type) relations between $\mathcal{F}_{P,P^{\text{op}}}\circ\mathcal{F}_{P^{\text{op}}, P} (\mathscr{S})$ and $\mathscr{S}$ for an arbitrary $\mathscr{S} \in D^\flat(\overline{G/U(P)}, \overline{\Q}_\ell)$?

We will state a precise conjecture after we have introduced intertwiners between general $P$ and $P'$ in the next section.

\subsection{Review of Braverman-Kazhdan's Normalized Intertwiners for Borels}\label{BorelsRev} We would now like to construct normalized intertwiners for arbitrary pairs of parabolics -- not merely opposites. Our construction is motivated by the example of Borels; particularly adjacent Borels.
So it behooves us to review Braverman-Kazhdan's intertwiners for the case of Borels \cite{BKI}.

In this section, let $M= T$, the maximal torus of $G$. The parabolics $\mathcal{P}_T$ are the set of Borels containing $T$; they form a torsor under the Weyl group of $G$. Fix a Borel $B$ and let $B_w = n_w B n_w^{-1}$, where $n_w \in N(T)$ is a lift of $w$. This gives a fixed bijection between the $W$ and $\mathcal{P}_T$ whch will be convenient. We write simply $\mathcal{F}_{w',w}$ for the intertwiner:

\[
\mathcal{S}(\overline{G/U_w}, \C) \to \mathcal{S}(\overline{G/U_{w'}}, \C).
\]


\noindent We will work over a local field $F$; let $\psi: F \to \C$ a fixed additive character.

The idea (see also Kazhdan \cite{KazhdForm}), is to exploit the geometry of $G/U$ and $G/U_{s_\alpha}$ for $s_\alpha$ a simple reflection. In particular, it turns out that $G/U$ and $G/U_{s_{\alpha}}$ are naturally-dual rank-2 vector bundles (minus their 0-sections) over a common base-space. This linear structure in turn us allows us to write a transformation $\mathcal{F}_{B_{s_\alpha}, B}$ as a classical Fourier transform on each fiber. The involutivity property then follows straightforwardly from Fourier inversion for $\A^2$ of the kind we have already seen.

Let $B'= B_{s_\alpha}$, and $U' = U_{s_\alpha}$. Then $U \cap (U')^{\op}$ is a single 1-parameter subgroup associated to the root $\alpha$; we 
call 
this $u_\alpha: \mathbb{G}_a \to G$. We assume $G$ is simply connected, so that $u_{\alpha}$ and $u_{-\alpha}$ generate a subgroup isomorphic to $\SL_2$ in $G$. 

Let $Q$ denote the subgroup of $G$ generated by $U$ and $U'$. Note that $Q =[P_\alpha, P_\alpha]$, the commutator group of the minimal non-Borel parabolic associated to $\alpha$. Then we notice that:

\[
\begin{tikzcd}
G/U' \arrow[rd] &     & G/U \arrow[ld] \\
                  & G/Q &                   
\end{tikzcd}
\]


\noindent exhibits $G/U$ as a $Q/U \simeq SL_2/\mathcal{U} \simeq \A^2\setminus\{(0,0)\}$ fibration over $G/Q$. Likewise, $G/U'$ is a $Q/U' \simeq SL_2/\mathcal{U}^{\op} \simeq \A^2\setminus\{(0,0)\}$ fibration over $G/Q$. (Here $\mathcal{U}$, resp. $\mathcal{U}^{\op}$,  refers to the upper, resp. lower, unipotent subgroup of $\SL_2$.) In other words, both $G/U$ and $G/U'$ are rank two vector bundles over $G/Q$, with zero section removed.

Let $\widetilde{G/U}$ and $\widetilde{G/U'}$ denote the corresponding rank two vector bundles over $G/Q$; equivalently, these are the \textit{relative} affine closures of $G/U$ and $G/U'$ over $G/Q$ given by the relative spectrum $\Spec_{G/Q}\left( p_*(\mathcal{O}_{G/U})\right)$. There is a fiberwise duality between these two rank-2 vector bundles; in fact, there is a $G$-invariant form 

\begin{equation}\label{BravKazhPair}
\mathfrak{S}_{BK}: \widetilde{G/U} \times_{G/Q} \widetilde{G/U'} \to \A^1,
\end{equation}


\noindent which we call the \textbf{Braverman-Kazhdan Pairing}. The BK pairing reduces to the pairing we have already seen between $\overline{\SL_2/U}$ and $\overline{\SL_2/U^{\op}}$ in (\ref{sl2pairing}). Indeed, let $\phi: \SL_2 \to G$, where

\[
\phi\left(\begin{matrix}
    1 & x\\
    0 & 1
\end{matrix}\right) = u_\alpha(x); \,\,\,\,\, \phi\left(\begin{matrix}
    1 & 0\\
    x & 1
\end{matrix}\right) = u_{-\alpha}(x);\,\,\,\,\,
\phi\left(\begin{matrix}
    t & 0\\
    0 & t^{-1}
\end{matrix}\right) = \alpha^{\vee}(t).
\]


\noindent Then we observe that if $gU$ and $hU'$ both lie over the same point in $G/Q$, we have $U'h^{-1}gU$ is a well-defined element of $\phi(u_{-\alpha}) \backslash \phi(\SL_2) / \phi (u_{\alpha})$. Of course, this last double quotient is isomorphic (via $\phi^{-1}$) to 

\begin{align*}
    \mathcal{U}^{\op}\backslash SL_2 / \mathcal{U} &\cong \A^1
\end{align*}


\noindent via (\ref{doubleuinv}). As in the $\SL_2$ case, this pairing is clearly $G$-invariant since $U'h^{-1}gU = U'(g'h)^{-1}(g'g)U$.

Now, we define:

\[
C(\widetilde{G/U}(k), \C) \xrightarrow{\mathcal{F}_{B', B}}  C(\widetilde{G/U'}(k), \C)
\]


\noindent as the fiberwise Fourier transform with respect to $\psi$ and the above-defined vector-space duality. That is to say, we ``pull-push" functions along the upper roof of the Cartesian diagram:


\begin{equation}\label{pullpushadj}
\begin{tikzcd}
                              & \widetilde{G/U'} \times_{G/Q} \widetilde{G/U} \arrow[ld] \arrow[rd] &                                \\
\widetilde{G/U'} \arrow[rd] &                                                                           & \widetilde{G/U} \arrow[ld] \\
                              & G/Q                                                                       &                               
\end{tikzcd}
\end{equation}


\noindent with respect to the kernel $\psi\left(\mathfrak{S}_{BK}(-,-)\right)$.

In formulae, let $f \in C(\widetilde{G/U}(k), \C)$. We pull $f$ back to a right $U$-invariant function on $G$, which we, somewhat abusively, also call $f$. Then, for all $g \in G$, and $\beta, \delta \in k$, we have:

\[
    \left(\mathcal{F}_{B', B} (f)\right) \left[ g \phi\left(\begin{matrix}
        * & \beta\\
        * & \delta
    \end{matrix}\right) \right]=  \int_{\A^2} f\left(g \phi\left(\begin{matrix}
        \alpha & *\\
        \gamma & *
    \end{matrix}\right) \right) \psi\left( \alpha \delta - \beta \gamma \right) d\alpha d\gamma,
\]


\noindent where $*$ means arbitrary entries such that $\left(\begin{matrix}
        * & \beta\\
        * & \delta
    \end{matrix}\right), \left(\begin{matrix}
        \alpha & *\\
        \gamma & *
    \end{matrix}\right) \in \SL_2(k)$. 
(The output is independent of the choice of $*$'s because $f$ is right $U(B)\supset u_\alpha$-invariant, and $\mathcal{F}_{B', B} (f)$ is right $U(B') \supset u_{-\alpha}$-invariant.) 

\subsection{Finite Field Subtleties} The geometry involved in these Fourier transforms becomes somewhat more transparent if we recall the elegant $G\times T$-equivariant stratification

\begin{equation}\label{stratGU}
\overline{G/U(B)} = \bigsqcup_{P \supset B} G/[P,P]
\end{equation}


\noindent for all parabolic subgroups $P \supseteq B$. The smooth locus consists precisely of those $G/[P,P]$ coming from the subminimal parabolics containing $B$; i.e., if $\theta$ represents a subset of simple roots (with respect to $B$) and $P_\theta$ represents the corresponding parabolic, then $\overline{G/U(B)} = \bigsqcup_{\theta} G/[P_\theta,P_\theta]$ while 

\begin{equation}\label{stratGUsm}
    \overline{G/U(B)}^{\textrm{sm}} = \bigsqcup_{\theta: \,|\theta|\le 1} G/[P_\theta,P_\theta].
\end{equation}


\noindent (See \cite{GannonGK}, Proposition 5.1.) This in turn corresponds to the union of all of the ``0 sections" that we append when taking the relative affine closure of $G/U$ with respect to $G/[P_\alpha, P_\alpha]$ (for $\alpha$ a simple root of $G$).

Now, if we attempt to repeat Braverman-Kazhdan's construction in the context of finite fields, we quickly hit an obstruction. We may perfectly well pull-push functions on rational points along (\ref{pullpushadj}); this will define an involutive Fourier transform $\mathcal{F}_{s_\alpha,e}$. But we can proceed no further in composing this transform by \textit{another} such 
transform\footnote{
Observe that while $B$ is adjacent to $B_{s_\alpha}$ for a simple reflection $s_\alpha$,  it is no longer true that $B_{s_\alpha}$ is adjacent to $B_{s_{\beta}s_{\alpha}}$ for $s_\beta$ another simple reflection. Rather, we see that we must conjugate $s_\beta$ by $s_\alpha$ to produce simple reflection with respect to the Borel $B_{s_\alpha}$. Hence we find that $B_{(s_\alpha s_\beta s_\alpha) s_\alpha} = B_{s_\alpha s_\beta}$ is the desired adjacent Borel. Notice that the order of $\alpha$ and $\beta$ is flipped from what one might naively expect.} 
$\mathcal{F}_{s_\alpha s_\beta,s_\alpha}$: applying the transform $\mathcal{F}_{s_\alpha,e}$ results in a function on $\widetilde{G/U'}(\F_q)$, where the relative affine closure has been taken with respect to $G/[P_\alpha,P_\alpha]$; the domain of $\mathcal{F}_{s_\alpha s_\beta,s_\alpha}$ consists of the set of functions on a \textit{different} relative affine closure of $G/U'$. Of course, over local fields, these discrepancies are irrelevant because the difference between $\widetilde{G/U}$ and $G/U$ is of measure 0, so we may restrict to functions on $G/U$ everywhere. But, as we have already mentioned, Zariski-closed subsets of varieties over finite fields have positive measure. We cannot simply ignore them.

There are two ways we might attempt to circumvent this problem. One approach, implemented in Sections \ref{RFC} and \ref{Sp4Func} below for the case of $\SL_3$ and $\Sp_4$, is to restrict our space of functions to those functions $f$ such that the support of $\mathcal{F}_{s_\alpha,e}(f)$ is entirely in $G/U'(\F_q) \subset \widetilde{G/U'}(\F_q)$. Another approach is to try to extend these ``adjacent" Fourier transforms to all functions on the absolute affine closure $\overline{G/U}$ from the outset. However, even the BK-pairing fails to extend to absolute affine closures (see the paragraph before Section \ref{CompRes} below for the specific case of $\SL_3$), so there are geometric obstacles to accomplishing this.

\subsection{Partial Intertwiners For Arbitrary Pairs of Parabolics}\label{GenInt} Recall that $\mathcal{P}_M$ denotes the set of parabolics containing the fixed Levi $M$. We now provide the conjectural description of normalized intertwiners for all pairs of parabolics $P, P' \in \mathcal{P}$, including non-opposites.

Let $Q$ denote the subgroup of $G$ generated by $U(P)$ and $U(P')$. Observe that $R_u(Q) = R_u(R) = U(P) \cap U(P')$. Consider the quotient $L:=Q/R_u(Q)$. This is a reductive (in fact semisimple) group; by the Levi splitting, we may write $L \subset Q \subset G$. The inclusion of $L$ into $Q$ (and so $G$) is canonical if we insist, as we shall, that $L$'s maximal torus be a subtorus of the maximal torus of $G$.

Now we let $P_L^{+} := P \cap L$, $P_L^{-} := P' \cap L$; likewise, we let $U_L^{+}: =U(P) \cap L$ and $U_L^{-}: =U(P') \cap L$. As the notation suggests, $P_L^{+}$ and $P_L^{-}$ are opposite parabolic subgroups of $L$, with corresponding Levi factor $M \cap L$. Now we consider the Wang monoid of $M\cap L$ in $L$:

\[
\overline{M\cap L} : = \Spec\, k[L]^{U_L^{-} \times U_L^{+}}.
\]


\noindent We let the sheaf $\mathfrak{J}_{\overline{M\cap L}}$ on $\overline{M\cap L}$ be defined as in (\ref{defJ}); i.e., the intermediate extension of the BK-sheaf $\Phi_{M\cap L,\, \mathfrak{u}_L^{+\,\vee}, \,\psi}$ on $M\cap L$ (with $\mathfrak{u}_L^{+}$  the adjoint representation of $(M\cap L)^{\vee}$ on $\mathfrak{u}_L^{+\,\vee}$), to $\overline{M\cap L}$.

Now, we have a map -- a generalized pair of Slipper's\footnote{One might say, arbitrary footwear.}:

\begin{equation}\label{generalSlip}
\mathfrak{S}: G/U(P') \times_{G/Q} G/U(P) \to \overline{M \cap L},
\end{equation}
\[
(gU', hU) \mapsto U_L^{-}g^{-1}hU_L^{+}.
\]


\noindent Indeed: observe that the fiber of $G/U(P)$ over $G/Q$ is:

\begin{equation}\label{fiberman}
Q/U(P) \simeq \left[L\cdot\left(U(P)\cap U(P')\right)\right]/U(P) \simeq \left[L\cdot U(P)\right]/U(P) \simeq L/(U(P) \cap L) \simeq L/U_L^{+},
\end{equation}


\noindent and similarly, the fiber of $G/U(P')$ over $G/Q$ is $L/U_L^{-}$. So, by $G$-invariance, (\ref{generalSlip}) reduces to the pairing:

\begin{equation}\label{LRel}
L/U_L^{-} \times L/U_L^{+} \to \overline{M \cap L},
\end{equation}


\noindent which is simply the restriction of the usual Slipper pairing $\overline{L/U_L^{-}} \times \overline{L/U_L^{+}}\to \overline{M \cap L}$ to $L/U_L^{-} \times L/U_L^{+}$. 

Now, each variety in the fiber product on the LHS of (\ref{generalSlip}) -- the parspherical spaces $G/U(P)$, $G/U(P')$, and the space $G/Q$\footnote{In the case of $P$ and $P'$ two Borels, $G/Q$ is actually a BK space $G/[R,R]$ where $R$ is the parabolic generated by $B$ and $B'$.}, are all quasi-affine. However, in general we cannot extend the generalized Slipper pairing $\mathfrak{S}$ to

\[
\overline{G/U(P)} \times_{\overline{G/Q}} \overline{G/U(P')}; 
\]


\noindent we already see this phenomenon for adjacent Borels in $\SL_3$ (see the paragraph before Section \ref{CompRes} below). Nor, indeed, can we remove the necessity for the Wang Monoid on the RHS of (\ref{generalSlip}); even in (\ref{LRel}) we see that two elements of $L$ might not differ by an element of the big cell $U_L^{-} (M\cap L) U_L^{+}$. However, we can extend the pairing to relative affine closures, as we have seen with the BK pairing:

\begin{equation}\label{SlipPairPart}
\mathfrak{S}: \widetilde{G/U(P)} \times_{G/Q} \widetilde{G/U(P')} \rightarrow \overline{M \cap L},
\end{equation}


\noindent where $\widetilde{G/U(P)}$ denotes the relative affine closure of $G/U(P)$ with respect to $G/Q$. Observe that, fiberwise, this gives the usual Slipper pairing, $\overline{L/U_L^{-}} \times \overline{L/U_L^{+}}\to \overline{M \cap L}$.

Now we may describe the intertwiner $\mathcal{F}_{P',P}$. We let $G$ be any split reductive group over $\F_q$.\footnote{In formulating the Fourier transform before, recall that we had restricted to $G$ semisimple.} Consider the diagram

\[
\begin{tikzcd}
                    & \widetilde{G/U(P')} \times_{G/Q} \widetilde{G/U(P)} \arrow[r, "\mathfrak{S}"] \arrow[ld, "\pi_1"'] \arrow[rd, "\pi_2"] & \overline{M\cap L} \\
\widetilde{G/U(P')} &                                                                                                                        & \widetilde{G/U(P)}
\end{tikzcd}
\]


\noindent We define the transform

\[
\mathcal{F}_{P', P} \, : \, D_c^{\flat}\left(\widetilde{G/U(P)}, \overline{\mathbb{Q}_l}\right) \to D_c^{\flat}\left(\widetilde{G/U(P')}, \overline{\mathbb{Q}_l}\right)
\]


\noindent given by 

\begin{equation}\label{PP'trnas}
    \mathcal{F}_{P',P}(\mathscr{S}) = R\pi_{1\,!} \Bigl( \pi_2^*\left(\mathscr{S}\right) \otimes \left(\mathfrak{S}^{*}\left(\mathfrak{J}_{\overline{M\cap L}}\right)[-m]\right) \Bigr)\left(\frac{n}{2}\right)[d],
\end{equation}


\noindent where $m = \dim\, M \cap L$, $d = \textrm{codim}\, \pi_2 = \dim\, G/U - \dim \,G/Q = \dim \,Q - \dim \, U = \dim\, L - \dim \, U_{L}^{+}$ (as per \ref{fiberman}) and $n = \dim\, G/U - \dim \,G/Q - \dim\, M \cap L$.

\vspace{3mm}

\begin{conj}\textbf{(Parabolic Kazhdan-Laumon.)}\label{MainConj} The restriction of the kernel in (\ref{PP'trnas}) to $G/U(P') \times_{G/Q}G/U(P)$, as $P$ ranges over all parabolics of $G$ containing $M$ (and with Levi quotient isomorphic to $M$), defines a Kazhdan-Laumon gluing datum for $G/U(P)$.
\end{conj}

\vspace{3mm}

This would be particularly noteworthy in those cases where $G/U(P)$ and $G/U(P')$ are not isomorphic as varieties.\footnote{In the case of Borels and basic affine space, the varieties $G/U(B)$ are isomorphic for all $B$, since all Borels are conjugate. It does not appear that $G/U(P)$ and $G/U(P')$ need to be isomorphic varieties for general associate $P$, the first example being the various 2+1+1 parabolics of $\SL_4$.}

Now, we expect that there is a natural category of sheaves on $\mathcal{S}$ for which $\mathcal{F}_{P',P}$ is involutive\footnote{I.e., satisfies $\mathcal{F}_{P,P'}\circ \mathcal{F}_{P',P} = \Id$.}; moreover, if we restrict to the subcategory of $\mathcal{S}$ of sheaves $\mathscr{S}$ such that both $\mathscr{S}$ and $\mathcal{F}_{P',P}(\mathscr{S})$ have support contained in $G/U$, we expect that $\mathcal{F}_{P',P}$ will define normalized intertwiners.

\textbf{Example 1}. Let us consider the most trivial case: $P = P'$. We see that $Q = U(P)$, so $L = Q/R_u(Q)$ is the trivial group $\{*\} =\Spec\,\F_q$. Thus the Wang monoid is also $\{*\}$, and the sheaf $\mathfrak{J}$ is the constant sheaf $\overline{\Q_\ell}$. We have the correspondence:

\[
\begin{tikzcd}
    & G/U\times_{G/U} G/U \arrow[r, "\mathfrak{S}"] \arrow[ld, "\pi_1"'] \arrow[rd, "\pi_2"] & \{*\} \\
G/U &                                                                                        & G/U  
\end{tikzcd}
\]


\noindent which shows that $\mathcal{F}_{P,P}$ is pull-push with respect to the kernel given buy the constant sheaf $\overline{\Q_\ell}$. Thus $\mathcal{F}_{P,P}= \Id$, as demanded by (\ref{Id1}).

\textbf{Example 2}. Consider $G$ an arbitrary split, semisimple simply-connected group over $\F_q$, with $B$ and $B'$ adjacent Borels. Then $Q$, the subgroup generated by $U$ and $U'$, is $[P_\alpha, P_\alpha]$, where $P_\alpha$ is the sub-minimal parabolic generated by $B$ and $B'$. In this case $L \cong \SL_2$, $L \cap M =\G_m$, and $\overline{L\cap M} = \A^1$, exactly as in (\ref{doubleuinv}). We find that (\ref{generalSlip}) is simply the Braverman-Kazhdan pairing of (\ref{BravKazhPair}). Meanwhile, the sheaf $\mathfrak{J}$ on $\overline{L\cap M}$ is simply the Artin-Schreier sheaf $\mathcal{L}_{\psi}[1]\left(\frac{1}{2}\right)$, as we have already computed from the case of $\SL_2$. Thus, generically, we see that our proposed Fourier transform is the local system $\mathfrak{S}_{BK}^*(\mathcal{L}_\psi)(1)$, which agrees with the 2-dimensional symplectic linear Fourier transform of Braverman and Kazhdan.

\noindent Examples 1 and 2 consider the case of $P$ and $P'$ close together; for the opposite extreme.

\textbf{Example 3}. Say $P' = P^{\op}$. Then $Q = G^{\textrm{der}}$ (so in particular, if $G$ is semi-simple, $Q=G$). Then $G/Q = G^{\ab}$, which is trivial for $G$ semisimple. Therefore, in this case, we see that we recover precisely the ``opposite" Fourier transform from (\ref{BigFour}). 

Thus we see that the general (partially defined) pairing (\ref{SlipPairPart}) generalizes both the pair of Slipper's and the Braverman-Kazhdan pairing.

\section{The \texorpdfstring{$(n-1)+1$}{(n-1)+1} Parabolic of \texorpdfstring{$\SL_n$}{SLn}}\label{Mirabolic}

Let $G = \SL_n$ and consider the $(n-1) + 1$ block parabolic:

\[
P = \left\{\left(\begin{matrix}
    a_{1,1} & a_{1,2} & \cdots & a_{1, n-1} & a_{1,n}\\
    a_{2,1} & a_{2,2} & \cdots & a_{2, n-1} & a_{2,n}\\
    \vdots & \vdots & \ddots & \vdots & \vdots \\
    a_{n-1, 1} & a_{n-1, 2} & \dots & a_{n-1, n-1} & a_{n-1, n}\\
    0 & 0 & \dots & 0 & a_{n,n}\\
\end{matrix}\right)\right\}.
\]


\noindent The unipotent radical is isomorphic to the vector space $k^{n-1}$:

\[
U(P) = \left\{\left(\begin{matrix}
    1 & 0 & \cdots & 0 & a_{1,n}\\
    0 & 1 & \cdots & 0 & a_{2,n}\\
    \vdots & \vdots & \ddots & \vdots & \vdots \\
    0 & 0 & \dots & 1 & a_{n-1, n}\\
    0 & 0 & \dots & 0 & 1\\
\end{matrix}\right)\right\},
\]


\noindent while the Levi $M \cong \GL_{n-1}$ is given by:

\[
P = \left\{\left(\begin{matrix}
    a_{1,1} & a_{1,2} & \cdots & a_{1, n-1} & 0\\
    a_{2,1} & a_{2,2} & \cdots & a_{2, n-1} & 0\\
    \vdots & \vdots & \ddots & \vdots & \vdots \\
    a_{n-1, 1} & a_{n-1, 2} & \dots & a_{n-1, n-1} & 0\\
    0 & 0 & \dots & 0 & D^{-1}\\
\end{matrix}\right)\right\}.
\]


\noindent Here $D = \det[a_{i,j}]_{1 \le i,j\le n-1}$. Since $D$ is determined by the upper left $(n-1) \times (n-1)$-block, we will often write $M$ as just the collection of matrices $[a_{i,j}]_{1 \le i,j\le n-1}$.

We see that the quotient $G/U(P)$ isomorphic to the locus of matrices in $\Mat_{n,n-1}$ of full rank (i.e., $\text{rank} = n-1$):

\begin{align*}
\left(\begin{matrix}
    a_{1,1} & a_{1,2} & \cdots & a_{1, n-1} & a_{1,n}\\
    a_{2,1} & a_{2,2} & \cdots & a_{2, n-1} & a_{2,n}\\
    \vdots & \vdots & \ddots & \vdots & \vdots \\
    a_{n-1, 1} & a_{n-1, 2} & \dots & a_{n-1, n-1} & a_{n-1, n}\\
    a_{n, 1} & a_{n, 2} & \dots & a_{n, n-1} & a_{n,n}\\
\end{matrix}\right) \left(\begin{matrix}
    1 & 0 & \cdots & 0 & *\\
    0 & 1 & \cdots & 0 & *\\
    \vdots & \vdots & \ddots & \vdots & \vdots \\
    0 & 0 & \dots & 1 & *\\
    0 & 0 & \dots & 0 & 1\\
\end{matrix}\right) \\
\mapsto \left(\begin{matrix}
    a_{1,1} & a_{1,2} & \cdots & a_{1, n-1} \\
    a_{2,1} & a_{2,2} & \cdots & a_{2, n-1}\\
    \vdots & \vdots & \ddots & \vdots & \\
    a_{n-1, 1} & a_{n-1, 2} & \dots & a_{n-1, n-1}\\
    a_{n, 1} & a_{n, 2} & \dots & a_{n, n-1} &\\
\end{matrix}\right).
\end{align*}


\noindent Indeed, the image of this $\GL_n$ under this map is $\Mat_{n,n-1}^{\text{rk} = n-1} \subset \Mat_{n,n-1}$. On the other hand the quotient variety $G/U(P)$ has dimension $(n^2-1)-(n-1) = n(n-1) = \dim\, \Mat_{n,n-1}$. Moreover, we can verify that the map is separable and bijective on points, proving the isomorphism of varieties. The degenerate locus in $\Mat_{n,n-1}$ has codimension 2 if $n=2$, and codimension $n-1$ if $n \ge 3$. In all these cases the codimension is $\ge 2$, whence, by Hartog's Lemma we see that the affine closure $\overline{G/U(P)} \cong \Mat_{n,n-1}$. Note that this is smooth (and in fact an affine space!). This is highly atypical -- part of the the ``miraculous" nature of this particular parabolic subgroup.

Next we describe the space $\overline{G/U(P^{\op})}$, the Wang Monoid, and the Slipper pairing. $\overline{G/U^{\op}}$ can be naturally identified $\Mat_{n-1, n}$ as follows: consider $\left(gU^{\op}\right)^{-1} = U^{\op} g^{-1}$. Then we have\footnote{We write $a_{ij}^{-1}$ for $([a]^{-1})_{ij}$; we do not mean $1/a_{ij}$.}:

\begin{align*}
 \left(\begin{matrix}
    1 & 0 & \cdots & 0 & 0\\
    0 & 1 & \cdots & 0 & 0\\
    \vdots & \vdots & \ddots & \vdots & \vdots \\
    0 & 0 & \dots & 1 & 0\\
    * & * & \dots & * & 1\\
\end{matrix}\right)\left(\begin{matrix}
    a_{1,1}^{-1} & a_{1,2}^{-1} & \cdots & a_{1, n-1}^{-1} & a_{1,n}^{-1}\\
    a_{2,1}^{-1} & a_{2,2}^{-1} & \cdots & a_{2, n-1}^{-1} & a_{2,n}^{-1}\\
    \vdots & \vdots & \ddots & \vdots & \vdots \\
    a_{n-1, 1}^{-1} & a_{n-1, 2}^{-1} & \dots & a_{n-1, n-1}^{-1} & a_{n-1, n}^{-1}\\
    a_{n, 1}^{-1} & a_{n, 2}^{-1} & \dots & a_{n, n-1}^{-1} & a_{n,n}^{-1}\\
\end{matrix}\right) \\
\mapsto \left(\begin{matrix}
    a_{1,1}^{-1} & a_{1,2}^{-1} & \cdots & a_{1, n-1}^{-1} & a_{1,n}^{-1}\\
    a_{2,1}^{-1} & a_{2,2}^{-1} & \cdots & a_{2, n-1}^{-1} & a_{2,n}^{-1}\\
    \vdots & \vdots & \ddots & \vdots & \\
    a_{n-1, 1}^{-1} & a_{n-1, 2}^{-1} & \dots & a_{n-1, n-1}^{-1} & a_{n-1,n}^{-1}\\
\end{matrix}\right).
\end{align*}


\noindent Now, the Wang monoid is by definition $\Spec \, k[G]^{U(P) \times U(P^{\op})}$. We see that the double-unipotent-invariants are $a_{ij}$, for $1 \le i, j \le n-1$:

\begin{align*}
 \left(\begin{matrix}
    1 & 0 & \cdots & 0 & 0\\
    0 & 1 & \cdots & 0 & 0\\
    \vdots & \vdots & \ddots & \vdots & \vdots \\
    0 & 0 & \dots & 1 & 0\\
    * & * & \dots & * & 1\\
\end{matrix}\right)\left(\begin{matrix}
    a_{1,1} & a_{1,2} & \cdots & a_{1, n-1} & a_{1,n}\\
    a_{2,1} & a_{2,2} & \cdots & a_{2, n-1} & a_{2,n}\\
    \vdots & \vdots & \ddots & \vdots & \vdots \\
    a_{n-1, 1} & a_{n-1, 2} & \dots & a_{n-1, n-1} & a_{n-1, n}\\
    a_{n, 1} & a_{n, 2} & \dots & a_{n, n-1} & a_{n,n}\\
\end{matrix}\right) \left(\begin{matrix}
    1 & 0 & \cdots & 0 & *\\
    0 & 1 & \cdots & 0 & *\\
    \vdots & \vdots & \ddots & \vdots & \vdots \\
    0 & 0 & \dots & 1 & *\\
    0 & 0 & \dots & 0 & 1\\
\end{matrix}\right)\\
=\left(\begin{matrix}
    a_{1,1} & a_{1,2} & \cdots & a_{1, n-1} & *\\
    a_{2,1} & a_{2,2} & \cdots & a_{2, n-1} & *\\
    \vdots & \vdots & \ddots & \vdots & \vdots \\
    a_{n-1, 1} & a_{n-1, 2} & \dots & a_{n-1, n-1} & *\\
    * & * & \dots & * & *\\
\end{matrix}\right)
\end{align*}


\noindent Thus the Wang Monoid is $\Mat_{n-1, n-1}$ (and observe that the Levi, isomorphic to $\GL_{n-1}$, is indeed an open subset).

Under these identifications the Slipper pairing $\mathfrak{S}(gU(P), hU(P^{\op})) := U(P^{\op})h^{-1}gU(P)$ is simply given by the multiplication of matrices:

\begin{align}\label{SlipPairSln}
    \Mat_{n, n-1} \times \Mat_{n-1, n} &\to \Mat_{n-1,n-1}\\
    \nonumber (M, N) &\mapsto NM.
\end{align}


Now, we will compute the function $J$. To do this we must calculate the weight decomposition of the adjoint representation of $M^{\vee}$ on $\mathfrak{u}_P^{\vee}$. Recall that the Langlands dual of $\SL_n$ is $\PGL_n$, and that the dual of $\GL_{n-1}$ is $\GL_{n-1}$. Thus the weights of $\PGL_n$ are given by 

\[
\left\{\sum_i a_i\pi_i : \sum a_i = 0 \right\}
\]


\noindent where $\pi_i: \text{diag}(t_1, \ldots, t_n) \mapsto t_i$. We see that the representation $\mathfrak{u}_P$ has weights $\{\pi_i - \pi_n\}_i$ for $i = 1, \ldots n-1$. These correspond to the cocharacters $e_i - e_n : \mathbb{G}_m \to T_M = T_G$, where $e_i: t \to \text{diag}(1, \ldots ,t, \ldots ,1) \subset \GL_{n-1}$, with $t$ in the $i$th position ($1 \le i \le n-1$). 

Restricting to the upper $(n-1) \times (n-1)$ block, we see that the coweights in $\GL_{n-1}$ are simply $e_i$, for $i = 1, \ldots, n-1$. Thus we have the diagram:

\[
\begin{tikzcd}
                      & \mathbb{G}_m^{n-1} \arrow[ld] \arrow[rd]                        &                                     \\
\mathbb{A}^1          & {(t_1, \ldots t_{n-1})} \arrow[ld, maps to] \arrow[rd, maps to] & T_{\text{GL}_{n-1}}                 \\
t_1 + \cdots +t_{n-1} &                                                                 & {\text{diag}(t_1, \ldots, t_{n-1})}
\end{tikzcd}
\]


\noindent Pushing and pulling the character $\psi$ from $\mathbb{A}^1$ to $T$, we see that for all $t \in T$, the $\gamma$-sheaf for the torus $\Phi_{T, \mathfrak{u}_P^{\vee}, \psi}$ has function $q^{\frac{1-n}{2}}(-1)^{n-1}\psi(\text{tr}(t))$, where $\tr: T \to \A^1$ is the standard trace.

Consider the local system $\mathcal{E}$ on $\Mat_{n-1}$ obtained by pulling back the Artin-Schreier sheaf $\mathcal{L}_\psi$ along the map $\tr: \Mat_{n-1} \to \A^1$. Since this matches the $\gamma$-sheaf $\Phi_{T, \mathfrak{u}_P^{\vee}, \psi}$ when restricted to $T$, we find that the $\gamma$-sheaf $\Phi_{M, \mathfrak{u}_P^{\vee}, \psi}$ agrees with $\mathcal{E}$ when we restrict both sheaves to the rss locus of $\GL_{n-1}$. Thus, by the  ``principal of perverse continuation" \cite{NgoPCMI}, we see that $\mathfrak{J}$, the intermediate extension of $\Phi_{M, \mathfrak{u}_P^{\vee}, \psi}$ to $\Mat_{n-1}$, is isomorphic to $\mathcal{E}$ (up to dimension shift and Tate twist). Thus the function $J$ is simply

\begin{equation}\label{gammasln}
J(m) = q^{-\frac{(n-1)^2}{2}}(-1)^{(n-1)^2}\psi(\tr(m)).
\end{equation}


\noindent Now, the Slipper pairing $\mathfrak{S}$ given  by (\ref{SlipPairSln}) is not smooth; however, the  pullback of a local system is a local system, which, since $\overline{G/U} \times \overline{G/U^{\op}}$ is smooth in this case, is perverse (up to a dimension shift). We see that our function is given by $\langle M, N \rangle = q^{-\frac{n^2 - n}{2}}\psi(tr(M))$. Therefore our Fourier transform is given by:

\[
\mathcal{F}_{P^{\op},P}(f)(N) = q^{-\frac{n^2 - n}{2}}\sum_{M \in \Mat_{n,n-1}} f(M) \psi(\tr(NM)).
\]


\noindent But this is simply a standard Fourier transform on vector spaces. Because of this, we see that involutivity holds as a consequence of standard Fourier inversion for vector spaces: $\mathcal{F}_{P,P^{\op}}\mathcal{F}_{P^{\op},P}(f) = f$. Moreover, this readily upgrades to sheaves, as a result of the Fourier-Deligne inversion formula.

We see that in this ``miraculous" case, the Fourier transform is involutive on all sheaves (and functions) on the affine closure. This appears to be related to the smoothness of $\overline{\SL_n/U}$. In the next section we will consider the simplest singular case, where restriction becomes necessary.

\section{The Kloosterman Fourier Transform on Affine Quadric Cones}

We now describe an involutive Fourier transform for a class of functions on certain affine quadric cones. This will prove involutivity for (a natural subspace of) functions on two more examples of paraspherical space: opposite Borels for $\SL_3$ and opposite Siegel parabolics for $\Sp_4$.

Let $V \cong k^d$ be a vector space over $\mathbb{F}_q$ of dimension $d$. We let $\psi: \mathbb{F}_q \to \C$ be a nontrivial additive character, as always. We let

\[
X = \{(v, v^{\vee}) \in V \times V^* : \langle v, v^\vee\rangle = 0\}.
\]


\noindent Observe that $X$ is an affine quadric cone in $k^{2d}$, with an isolated conical singularity at the origin. There is a natural scalar action of $\mathbb{G}_m$ on this variety: $\lambda \cdot(v, w) = (\lambda v, \lambda w)$ for $\lambda \in \mathbb{F}_q^{\times}$.

For $a \in \mathbb{F}_q$ we let 

\[
\Kl(a) := \sum_{t \in \mathbb{F}_q^{\times}} \psi\left(\frac{a}{t} + t\right),
\]


\noindent be the standard Kloosterman function on $\mathbb{F}_q$. Now consider the composite map 

\[
K: X \times X \to \mathbb{A}^1 \to \C
\]
\[
\left((u, u^{\vee}),(v, v^{\vee})\right) \mapsto \langle u, v^{\vee}\rangle + \langle v, u^{\vee}\rangle \mapsto \Kl\left(\langle u, v^{\vee}\rangle + \langle v, u^{\vee}\rangle\right). 
\]


\noindent Let 

\begin{equation}\label{quadshw}
\mathcal{S}(X(\mathbb{F}_q),\C) =  \left\{f \in \text{Fun}(X(\mathbb{F}_q),\C) : \sum_{\lambda \in \mathbb{F}_q^{\times}} f(\lambda x) = 0\right\}.
\end{equation}


\noindent We call this the space of ``special" functions. We define on $\mathcal{S}$ the following transform:

\begin{equation}\label{FourKl}
\mathcal{F}(f)(y) = \sum_{x \in X(\mathbb{F}_q)} f(x)K(x,y).
\end{equation}


Before we state our theorem, let us observe two things about the space $\mathcal{S}$. Firstly, we may see immediately that $f(0) = 0$. Thus $f \in \mathcal{S}$ is solely supported on the smooth locus of the variety $X$. Secondly, we observe that $\mathcal{F}$ preserves the space $\S$. Indeed:

\[
\sum_{\lambda\in\F_q^{\times}}\mathcal{F}(f)(\lambda y) = \sum_{\lambda\in\F_q^{\times}}\sum_{x \in X(\mathbb{F}_q)} f(x)K(x,\lambda y).
\]


\noindent But letting $x = (u, u^{\vee})$ and $y =(v, v^{\vee})$, we see that  $K(x,\lambda y) = \Kl(\langle u,\lambda v^{\vee}\rangle + \langle\lambda v, u^{\vee}\rangle) = \Kl(\langle \lambda u, v^{\vee}\rangle + \langle v, 
 \lambda u^{\vee}\rangle) = K(\lambda x, y)$ by bilinearity. Thus we obtain:

 \begin{align*}
\sum_{\lambda\in\F_q^{\times}}\sum_{x \in X(\mathbb{F}_q)} f(x)K(x,\lambda y) =& \sum_{\lambda\in\F_q^{\times}}\sum_{x \in X(\mathbb{F}_q)} f(x)K(\lambda x,y)\\
=& \sum_{x \in X(\mathbb{F}_q)} \sum_{\lambda\in\F_q^{\times}} f(\lambda^{-1}x)K( x,y)\\
=& \,0.
\end{align*}


\noindent (As an aside will also note that $\sum_{\lambda \in \F_q^{\times}}\sum \Kl(x, \lambda y) = 1$, unless $\langle u, v^{\vee}\rangle + \langle v, 
 u^{\vee}\rangle = 0$, in which case it is $1-q$.)

 We are now ready to state our involutivity theorem:

\begin{thm}\label{KlInv}
The transform $\mathcal{F}$ satisfies

 \[
\mathcal{F}^2(f) = q^{2d}f 
 \]


\noindent for all $f \in \mathcal{S}(X(\mathbb{F}_q),\C)$.
\end{thm}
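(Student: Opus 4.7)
The plan is to rewrite $\mathcal{F}^2(f)(z) = \sum_{x \in X(\F_q)} f(x)\, L(x, z)$, where the double kernel $L(x,z) := \sum_{y \in X(\F_q)} K(x,y)\, K(y, z)$ encodes $\mathcal{F}^2$, and then to show via a direct Gauss-sum computation that $L$ acts as $q^{2d}\cdot\Id$ on $\mathcal{S}$.

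To compute $L$, I would first expand each Kloosterman factor from its definition, so that
\[
L(x,z) = \sum_{s,t \in \F_q^\times} \psi(s+t)\sum_{y \in X(\F_q)} \psi\!\left(\tfrac{B(x,y)}{s} + \tfrac{B(y,z)}{t}\right),
\]
where $B((u,u^\vee),(v,v^\vee)) := \langle u, v^\vee\rangle + \langle v, u^\vee\rangle$ is the symmetric bilinear form whose associated quadratic form $Q(v, v^\vee) = \langle v, v^\vee\rangle$ cuts out $X$. By bilinearity the inner phase equals $\psi(B(c,y))$ for $c := x/s + z/t$, reducing matters to evaluating $\sum_{y \in X}\psi(B(c,y))$. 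Using the Fourier expansion $\mathbf{1}_X(y) = q^{-1}\sum_{\mu \in \F_q} \psi(\mu Q(y))$ and completing the square in $y$ for each fixed $\mu \ne 0$ on $V \times V^*$, one arrives at
\[
\sum_{y \in X}\psi(B(c,y)) = q^{2d-1}\mathbf{1}_{c=0} + q^{d-1}\sum_{\mu \ne 0}\psi(-Q(c)/\mu).
\]
Since $x, z \in X$ a direct computation gives $Q(c) = B(x,z)/(st)$; substituting back, together with $\sum_{s,t \ne 0}\psi(s+t) = 1$ and $\sum_{\mu \ne 0}\psi(-Q(c)/\mu) = q\,\mathbf{1}_{Q(c)=0}-1$, produces the clean decomposition $L = L_1 + L_2$ with
\[
L_1(x,z) := q^{2d-1}\!\!\sum_{s,t \in \F_q^\times}\!\!\psi(s+t)\,\mathbf{1}_{x = -(s/t)z}, \qquad L_2(x,z) := q^d\,\mathbf{1}_{B(x,z) = 0} - q^{d-1}.
\]

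The last step is to pair each piece of $L$ against $f \in \mathcal{S}$. For the $L_1$-contribution with $z \ne 0$, I would reindex by $\lambda = -s/t$ and evaluate the inner $t$-sum as $q\,\mathbf{1}_{\lambda=1}-1$, yielding
\[
\sum_{x \in X}f(x)\, L_1(x,z) = q^{2d}f(z) - q^{2d-1}\sum_{\lambda \in \F_q^\times}f(\lambda z),
\]
whose residual sum vanishes by the defining property of $\mathcal{S}$. For the $L_2$-contribution, observe that both $X$ and its hyperplane section $\{x \in X : B(x,z) = 0\}$ are stable under the $\G_m$-scaling (and contain $0$); each therefore decomposes into $\G_m$-orbits, on every one of which $f \in \mathcal{S}$ has zero sum, forcing $\sum_x f(x) L_2(x,z) = 0$. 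The edge case $z=0$ follows from $f(0) = 0$ (itself a consequence of $\G_m$-averaging on the fixed orbit $\{0\}$) and the same orbit-sum argument applied to $L(x,0) = q^{2d-1}\mathbf{1}_{x=0} + (q^d - q^{d-1})$.

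The step I expect to be the main obstacle is the explicit Gauss-sum evaluation of $\sum_{y \in X}\psi(B(c,y))$, particularly the bookkeeping of the $\mu = 0$ versus $\mu \ne 0$ contributions after completing the square, which must conspire to place the residual $L_2$ on a $\G_m$-stable locus. The conceptual content is precisely that the ``special'' condition defining $\mathcal{S}$ is tailored to annihilate this $\mu \ne 0$ junk — so the proof hinges on verifying that the support of $L_2(\,\cdot\,,z)$ is itself $\G_m$-invariant, which it is because $B(\lambda x, z) = \lambda B(x,z)$. Once the decomposition $L = L_1 + L_2$ is in hand, the rest is transparent algebra exploiting the $\G_m$-equivariance hard-wired into $\mathcal{S}$.
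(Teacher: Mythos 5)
Your proposal is correct, and it reaches the theorem by a genuinely different route than the paper. The paper first reduces to the generators $\delta_x-\delta_{\lambda x}$ of $\mathcal{S}$, then expands both Kloosterman factors, exploits scale-invariance of $X$ via the substitution $(v,v^{\vee})\mapsto(t^{-1}s^{-1}v,t^{-1}s^{-1}v^{\vee})$ to decouple the two auxiliary variables, and finally evaluates the resulting weighted count of points of $X$ lying on zero, one, or two affine hyperplanes by a case-by-case count of $\F_q$-points on quadrics (the table (\ref{casesfor})), whose row-differences produce the identity (\ref{KSum}). You instead evaluate the iterated kernel $L(x,z)=\sum_{y\in X}K(x,y)K(y,z)$ in closed form: detecting the quadric by $\mathbf{1}_X(y)=q^{-1}\sum_{\mu}\psi(\mu Q(y))$, completing the square (which involves no division by $2$, so it is uniform in the characteristic, and uses only the nondegeneracy of $B$ on $V\times V^*$ and $Q(x/s+z/t)=B(x,z)/(st)$ for $x,z\in X$), and arriving at $L=L_1+L_2$ with $L_1$ acting as $q^{2d}\,\Id$ minus a $\G_m$-average and $L_2$ supported on $\G_m$-stable loci. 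This lets you pair against an arbitrary $f\in\mathcal{S}$ directly, sidestepping both the reduction to difference-of-deltas and the stratified point count; indeed your closed form for $L$ reproduces every entry of (\ref{casesfor}), so your argument is, if anything, more self-contained, since the paper only asserts that table ``by elementary means.'' What the paper's route buys in exchange is the explicit stratification of $X\times X$, which makes visible why the ``special'' condition is essentially optimal (as noted in the Remark following the theorem); both arguments ultimately rest on the same structural facts, namely the $\G_m$-equivariance $K(\lambda x,y)=K(x,\lambda y)$ and the $\G_m$-invariance of the residual terms annihilated by $\mathcal{S}$.
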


\begin{proof}
    Observe that the space $\mathcal{S}$ is generated by functions of the form $\delta_x - \delta_{\lambda x}$, where $x = (u, u^{\vee}) \in X$ and $\delta_x$ is the indicator function of $x \in X(\mathbb{F}_q)$. The theorem therefore boils down to verifying the following identity:

    \begin{equation}\label{KSum}
        \sum_{y \in X} \left[K(x,y)- K(\lambda x, y)\right]K(y,z) = \begin{cases}
            \,q^{2d} \text{  if  }z=x\\
            -q^{2d} \text{  if  }z=\lambda x\\
            0 \text{  otherwise  }
        \end{cases}
    \end{equation}


\noindent We will begin by considering the double Fourier transform of the Dirac mass at $x$:

\begin{equation}\label{FourDirac}
     \mathcal{F}^2(\delta_x)(z) = \sum_{y \in X(\F_q)} K(x,y)K(y,z).
\end{equation}


\noindent As above, let $x = (u, u^{\vee})$, and $z = (w, w^{\vee})$ (where, of course, $\langle u, u^{\vee} \rangle =\langle w, w^{\vee} \rangle =0$). Developing the sum on the left hand side of (\ref{FourDirac}) we obtain:

\begin{equation}
\sum_{(v,v^{\vee}) \in X(\F_q)}\left(\sum_{t \in \F_q^{\times}} \psi\left[t\left( \langle u, v^{\vee}\rangle  + \langle v, u^{\vee} \rangle\right)  + t^{-1}\right]\right)
\cdot\left(\sum_{s \in \F_q^{\times}} \psi\left[s\left( \langle v, w^{\vee}\rangle  + \langle w, v^{\vee} \rangle\right) + s^{-1}\right] \right)
\end{equation}


\noindent or, swapping the order of the summation:

\[
\sum_{t,s \in \F_{q}^{\times}}\sum_{(v,v^{\vee}) \in X(\F_q)} \psi\left[t\left( \langle u, v^{\vee}\rangle  + \langle v, u^{\vee} \rangle\right)\right]\cdot\psi(t^{-1})
\cdot \psi\left[s\left( \langle v, w^{\vee}\rangle  + \langle w, v^{\vee} \rangle\right)\right]\cdot\psi(s^{-1}).
\]


Now we recall that $X(\F_q)$ is invariant under scaling: thus we may apply the substitution\footnote{
Applying a substitution that eliminates the intractable Kloosterman sum terms when, as in our situation, we have a product of \textit{two} such Kloosterman sums, bears a striking resemblance to the squaring (followed by polar substitution) technique used to resolve the Gaussian integral.
} 
$(v,v^{\vee}) \mapsto (t^{-1}s^{-1}v, t^{-1}s^{-1}v^{\vee})$ to the point $(v, v^{\vee}) \in X(\F_q)$. Then we obtain: 

\[
\sum_{t,s \in \F_{q}^{\times}}\sum_{(v,v^{\vee}) \in X(\F_q)} \psi\left[s^{-1}\left( \langle u, v^{\vee}\rangle  + \langle v, u^{\vee} \rangle\right)\right]\cdot\psi(t^{-1})
\cdot \psi\left[t^{-1}\left( \langle v, w^{\vee}\rangle  + \langle w, v^{\vee} \rangle\right)\right]\cdot\psi(s^{-1})
\]


\noindent which we may rewrite as:

\[
\sum_{t,s \in \F_{q}^{\times}}\sum_{(v,v^{\vee}) \in X(\F_q)} \psi\left[s^{-1}\left( \langle u, v^{\vee}\rangle  + \langle v, u^{\vee} \rangle + 1\right)\right]
\cdot \psi\left[t^{-1}\left( \langle v, w^{\vee}\rangle  + \langle w, v^{\vee} \rangle + 1\right)\right].
\]

Now we may sum with respect to $t$ and $s$ separately. We recall that $\sum_{t\in\F_q^{\times}}\psi(t^{-1}a)$ is either $-1$ if $a  \ne 0$ or $q-1$ if $a = 0$. So let us define a constructible function $c_{x, z}: X(\F_q) \to \C$ as follows:

\[
c_{x,z}(v,v^{\vee}) = \begin{cases}
    1 \text{ if }  \langle u, v^{\vee}\rangle  + \langle v, u^{\vee} \rangle \ne -1 \text{ and } \langle v, w^{\vee}\rangle  + \langle w, v^{\vee} \rangle  \ne -1\\
    -(q-1) \text{ if only one of }  \langle u, v^{\vee}\rangle  + \langle v, u^{\vee} \rangle \text{ or }  \langle v, w^{\vee}\rangle  + \langle w, v^{\vee} \rangle  \text{ equals }-1\\
    (q-1)^2 \text{ if }  \langle u, v^{\vee}\rangle  + \langle v, u^{\vee} \rangle = -1 \text{ and } \langle v, w^{\vee}\rangle  + \langle w, v^{\vee} \rangle  = -1.
\end{cases}
\]


\noindent Then we see that 

\[
\mathcal{F}^2(\delta_x)(z) = \sum_{y \in X(\F_q)} c_{x,z}(y).
\]


Geometrically we see that the two constraints $\langle u, v^{\vee}\rangle  + \langle v, u^{\vee} \rangle = -1$ and $\langle v, w^{\vee}\rangle  + \langle w, v^{\vee} \rangle  = -1$ correspond to affine hyperplanes in $V \times V^*$, while $X$ is a quadric hyperplane in $V \times V^*$. We are thus weighting the points of $X(\F_q)$ by $1$, $-(q-1)$, or $(q-1)^2$ depending on whether the point lies in zero, one, or two of these affine hyperplanes. 

Let us call the varieties -- affine hyperplanes -- cut out by $\langle u, v^{\vee}\rangle  + \langle v, u^{\vee} \rangle = -1$ and $\langle v, w^{\vee}\rangle  + \langle w, v^{\vee} \rangle  = -1$ by $\Pi_1$ and $\Pi_2$. Observe that replacing $x$ with $\lambda x$ results in $\langle \lambda u, v^{\vee}\rangle  + \langle v, \lambda u^{\vee} \rangle = -1$; or, equivalently $\langle  u, v^{\vee}\rangle  +  \langle v, u^{\vee} \rangle = -\lambda^{-1}$. This is a parallel translation of $\Pi_1$; let us call it $\Pi_1^{+\lambda^{-1}}$. The content of (\ref{KSum}) is that the number of $\F_q$-points in $\Pi_1(\F_q) \cap \Pi_2(\F_q) \cap X(\F_q)$ is equal to $\Pi_1^{+\lambda^{-1}}(\F_q)  \cap \Pi_2(\F_q) \cap X(\F_q)$ unless $\Pi_1 = \Pi_2$. (If $\Pi_1 = \Pi_2$, then $\Pi_1^{+\lambda^{-1}}$ is parallel, but not equal, to $\Pi_2$ -- hence the two have empty intersection.)

In any event, we observe that varieties $\Pi_1 \cap \Pi_2 \cap X$ are quadric hypersurfaces in affine space, so we may apply elementary means to count points. We find that:

\begin{equation}\label{casesfor}
\sum_{y \in X(\F_q)} c_{x,z}(y) = \begin{cases}
    q^{2d-1} + q^d - q^{d-1} \text{ if } x = z = 0\\
    q^{2d} - q^{2d-1} + q^d - q^{d-1} \text{ if } x = z, \text{ with neither } 0\\
    -q^{2d-1} + q^d - q^{d-1} \text{ if } x \ne z \text{ are proportional with neither } 0\\
    q^d -q^{d-1} \text{ if either one and only of } x, z = 0;\text{ or else  if }\\
    \,\,\,\,\,\,\, \,\,\,\,\,\,\,\,\,\,\,\,\,\,\,\,\,\, x \text{ and } z \text{ are not proportional and }  \langle u, w^{\vee} \rangle + \langle w, u^{\vee} \rangle = 0\\
    -q^{d-1} \text{ if } x \text{ and } z \text{ are not proportional and }  \langle u, w^{\vee} \rangle + \langle w, u^{\vee} \rangle \ne 0.
\end{cases}
\end{equation}


\noindent Observe the pleasant fact that, excepting the most singular stratum -- the origin $x = z= 0$ -- subtracting from a row the row below it yields a single power of $q$ (with sign). Moreover, these strata have been ordered so that each stratum in $X(\F_q)$ lies in the closure of the strata beneath it. For $\lambda \in \F_q^{\times}$, each stratum is invariant under the action of $(x, z) \mapsto (\lambda x, z)$, except for the $x=z$, $(x,z) \ne (0,0)$ stratum. If $x = z$, then $(\lambda x, z)$ (for $\lambda \ne 1$) lies in the stratum where $x \ne z$, neither are 0, and $x$ and $z$ are proportional. We conclude that

\begin{equation*}\label{FourDiracMon}
     \mathcal{F}^2(\delta_x - \delta_{\lambda x}) = q^{2d}(\delta_x - \delta_{\lambda x}),
\end{equation*}


\noindent as claimed.

\end{proof}

\noindent \textbf{Remark}. We see from (\ref{casesfor}) that the function space $\mathcal{S}$ is essentially the best possible space on which $q^{-d}\mathcal{F}$ is involutive. As mentioned previously, it appears to be analogous to functions coming from the category of ``regular monodromic sheaves" in \cite{BravermanPolishchuk1998}.  On the other hand, one can readily verify that the kernel used here is the finite field analogue of the kernel used in the Fourier transform for quadrics found in Gurevich-Kazhdan, Getz-Hsu-Leslie, and Getz \cite{GurKazh, GHL, GetzQuad}. Kobayashi-Mano \cite{KobMan}  use a kernel that is not analytic (given by the sum of a Bessel function and a finite sum of derivatives of Dirac masses at the origin). It is conceivable that one may remove the restriction on the function space if one alters the kernel along these lines; or, perhaps, if one includes certain ``boundary terms" as in \cite{GetzQuad}. Indeed, ibid. shows that the ``boundary terms" of the local field quadric Fourier transform are given by sums over smaller quadrics -- we have already seen the emergence of smaller quadrics in counting points on intersections of the quadric $X$ with pairs of affine hyperplanes.

\section{The Case of \texorpdfstring{$\SL_3$}{SL3}}

Up to isomorphism, we see that there are two kinds of nontrivial parabolic in $G$: the Borel (i.e., the 1+1+1-block parabolic), and the 2+1 block parabolic. The latter is already dealt with in Section \ref{Mirabolic}. 

\subsection{The Space \texorpdfstring{$\overline{\SL_3/U}$}{SL3/U}} We will ow describe the paraspherical space $\overline{G/U(B)}$, for $B$ the standard Borel. As with all Borels, this is the same as the BK-space $\overline{G/[B,B]}$. As we shall see, this case is the smallest-rank example for which $\overline{G/U}$ which is singular.

Observe that, like in the case of $\SL_2$, we have a map

\[
G/U(B) \,\,\,\longrightarrow \,\,\A^3
\]
\[
    \left(\begin{matrix}
        a & b & c\\
        d & e & f\\
        g & h & i
    \end{matrix}\right) \mapsto \left(\begin{matrix}
        a\\
        d\\
        g
    \end{matrix}\right).
\]


\noindent However, a simple dimension count shows that $G/U$ has dimension $8-3 = 5$, so this map cannot be an isomorphism onto its image. 

Observe that, were we to take the \textit{left} quotient by $U(B)$, we would have another map:

\begin{equation}\label{bottomrow}
    U(B)\backslash G \longrightarrow\A^3
\end{equation}
\[
\nonumber \left(\begin{matrix}
        a & b & c\\
        d & e & f\\
        g & h & i
    \end{matrix}\right) \mapsto \left(\begin{matrix}
        g & h & i
    \end{matrix}\right).
\]


\noindent Now we can define a map $G/U(B) \to \A^3$ given by inversion: $g U \mapsto Ug^{-1} \in U(B)\backslash G$. Thus we may postcompose inversion with the map (\ref{bottomrow}) to give us:

\begin{equation}\label{minormap}
    G/U(B) \longrightarrow \A^3
\end{equation}
\[
  \nonumber \left(\begin{matrix}
        a & b & c\\
        d & e & f\\
        g & h & i
    \end{matrix}\right) \mapsto \left(\begin{matrix}
        dh-ge & gb-ah & ae-db
    \end{matrix}\right).
\]


\noindent In other words, we have two invariants for $G/U(B)$: the first column of $g$ and the third row of $g^{-1}$ (otherwise known as the minors of the third column, or the cross-product of the first two columns; recall that $\det(g) =1$, which is why there are no denominators). Notice that the canonical pairing of these row-and-column vectors is always 0 (as the product would be the $(3,1)$ entry of $g^{-1}g = \Id$).

Thus, if we let $V = \A^3$ and $V^* = \A^3$, viewed as the dual vector space, we may define the map $\phi$:

\[
    G/U(B) \xrightarrow{\phi} V \times V^{*}
\]
\[    \left(\begin{matrix}
        a & b & c\\
        d & e & f\\
        g & h & i
    \end{matrix}\right) \mapsto\left( \left(\begin{matrix}
        a\\
        d\\
        g
    \end{matrix}\right), \left(\begin{matrix}
        dh-ge & gb-ah & ae-db
    \end{matrix}\right)\right),
\]


\noindent whose image is contained in the closed subset $\{(v, v^{\vee}) \in V \times V^* : \langle v, v^{\vee}\rangle = 0\}$, and consists of those pairs, subject to this constraint, such that neither $v^*$ nor $v$ is 0. Let

\begin{equation}\label{sl3affcl}
X := \{(v, v^{\vee}) \in V \times V^* : \langle v, v^{\vee}\rangle = 0\}
\end{equation}


\noindent and 

\[
X^* := \{ (v, v^{\vee}) \in X: v \ne 0 \text{ and } v^{\vee} \ne 0\}.
\]


\noindent Note that $X$ has dimension $6-1 = 5$, and is affine,  while $X - X^*$ is the union of two copies of $\A^3$, glued at the origin. Hence the complement of $X^*$ in $X$ is of codimension 2; thus, by Hartog's Lemma, any regular function on $X^*$ uniquely extends to $X$. Hence $X$ is the affine closure of $X^*$. Moreover, by a dimension count, we see that $\phi$ induces an isomorphism

\[
G/U(B) \xrightarrow{\sim} X^*
\]


\noindent so that 

\[
\overline{G/U(B)} \cong X.
\]


\noindent $X$ is a quadric hypersurface in $\A^6$, the affine cone over a quadric hypersurface in $\P^5$. It has an isolated, conical singularity at the origin -- since $\SL_2$ and the mirabolics of $\SL_3$ all have smooth paraspherical spaces, we see that this is the smallest example of a singular $\overline{G/U(P)}$. 

\subsection{The Slipper Pairing} Next, we consider the Slipper pairing for opposite Borels:

\[
\overline{G/U(B)}\times \overline{G/U(B^{\op})} \to \overline{T}.
\]


\noindent It is a well-known fact that the double-unipotent invariant functions of $\SL_n$ are generated by the determinants of the $i \times i$ square matrices located in the upper left corner of $\SL_n$, $i \le n-1$ \cite{GrossBook, NgoPCMI}.\footnote{For $i = n$, this is just the determinant of the whole matrix, which is 1 in the $\SL$ case. For $\GL_n$ this gives another double unipotent invariant. See Grosshans \cite{GrossBook}, Lemma 18.7, page 104.} In our case, the two invariants of 

\[
\left(\begin{matrix}
        a & b & c\\
        d & e & f\\
        g & h & i
    \end{matrix}\right)
\]


\noindent are $a$ and $ae - db$. These two functions are algebraically independent; thus the Wang monoid in this case is:

\[
\overline{T} \cong \A^2 \cong \Spec\, k[a, ae-bd].
\]


Now, we let $X^{\op} = \overline{G/U(B^{\op})} = \{(v, v^{\vee}) \in V \times V^*: \langle v, v^{\vee}\rangle = 0\}$, where 

\[
G/U(B^{\op}) \to X^{\op}
\]
\[
gU(B^{\op}) \mapsto (\text{third column of }g, \text{ first row of }g^{-1})
\]
\[
\left(\begin{matrix}
        a & b & c\\
        d & e & f\\
        g & h & i
    \end{matrix}\right)\left(\begin{matrix}
        1 & 0 & 0\\
        * & 1 & 0\\
        * & * & 1
    \end{matrix}\right) \mapsto \left( \left(\begin{matrix}
        c \\
        f \\
        i
    \end{matrix}\right), \left(\begin{matrix}
        ei-hf & hc-bi & bf-ec
    \end{matrix}\right)\right).
\]


\noindent The Slipper pairing is given by 

\[
\mathfrak{S}: X \times X^{\op} \to \A^2
\]
\[
\left((v, v^{\vee}),(w, w^{\vee})\right) \mapsto (\langle w, v^{\vee} \rangle, \langle v, w^{\vee} \rangle),
\]


\noindent since the upper right entry of $h^{-1}g$ is given by $(\text{first row of }h^{-1})\cdot(\text{first column of } g)$, while the determinant of the upper right $2\times 2$ is the same as the $(3,3)$-entry of $g^{-1}h$, which is $(\text{third row of }g^{-1})\cdot(\text{third column of } h)$. As we would expect, the pairing on the ``$G$-diagonal", in other words, $\mathfrak{S}(gU(B), gU(B^{\op}))$, is always $(1,1)$, the identity of the monoid $\A^2$.


\subsection{The Function \texorpdfstring{$J$}{J}} 
We now examine the function $J$ on the Wang monoid $\A^2$. To accomplish this we must analyze the adjoint representation 
of $M^{\vee} = T^{\vee}$ on $\mathfrak{u}^{\vee}$. We have 

\[X^*(T^{\vee}) = X_*(T) = \left\{a_1\lambda_1 + a_2\lambda_2 + a_3 \lambda_3 : a_1 + a_2 + a_3 = 0\right\}.
\]


\noindent Recall that $\SL_3^{\vee} = \PSL_3$. 
We see that the the adjoint action of $T^{\vee}$ in $\mathfrak{u}^{\vee}$ has weights $\lambda_1 - \lambda_2$, $\lambda_2 - \lambda_3$, and $\lambda_1 - \lambda_3$ 
(i.e., the positive coroots of $\SL_3$). Hence the Kloosterman sheaf (forgetting the Tate twist) on the torus $T$ of $\SL_3$ is given by the pull-push of the Artin-Schreier sheaf along the map:

\[
\begin{tikzcd}
             & \mathbb{G}_m^3 \arrow[ld, "\text{Tr}"'] \arrow[rd, "p_{\underline{\lambda}}"] &                                                    \\
\mathbb{A}^1 & {(t_1, t_2, t_3)} \arrow[ld, maps to] \arrow[rd, maps to]                     & T                                                  \\
t_1+t_2+t_3  &                                                                               & {\text{diag}(t_1t_3,\,t_2t_1^{-1},\,t_2^{-1}t_2^{-1})}
\end{tikzcd}
\]


\noindent Observe that the determinant of the $\text{diag}(t_1t_3,\,t_2t_1^{-1},\,t_2^{-1}t_2^{-1})$ is 1, so the image of $p_{\underline{\lambda}}$ does indeed lie in $T \subset \SL_3$.

Now, recall that our two double-unipotent invariant functions in $\overline{T}$ are the upper left corner, and the determinant of the upper left $2 \times 2$. Let us call these invariants $A$ and $B$, respectively. We find that $A(\text{diag}(t_1t_3,t_2t_1^{-1}t_2^{-1}t_2^{-1}))= t_1 t_3$ and $B(\text{diag}(t_1t_3,t_2t_1^{-1}t_2^{-1}t_2^{-1})) = t_2t_3$. Thus we find that $A + B = t_3(t_1+t_2)$, and that $t_1 + t_2 + t_3 = (A+B)t_3^{-1} + t_3$.

Therefore, as a function of $A$ and $B$ we may write 

\begin{equation}\label{DescFun}
p_{\underline{\lambda}!}\text{Tr}^*(\psi) \left(\text{diag}(a, b, c)\right) = \sum_{t_3\in \F_q^{\times}} \psi((A+B)t_3^{-1} + t_3) = \Kl(A+B),
\end{equation}


\noindent where $ac = A$, $bc =B$. We see that $\Kl(A+B)$ will be, up to a sign and power of $q$ discussed below, the function $J$ on the Wang monoid $\A^2$ (with coooridnates $A$ and $B$).

At the level of sheaves, say we let $\mathscr{K}_\psi$ denote denote the perverse sheaf obtained by the left-to-right pull-push of the Artin-Schreier sheaf $\mathcal{L}_\psi$ along the correspondence:

\begin{equation}\label{KlShDef}
\begin{tikzcd}
             & \mathbb{G}_m^2 \arrow[ld, "\sigma"'] \arrow[rd, "\varpi"] &              \\
\mathbb{G}_a &                                                        & \mathbb{G}_m,
\end{tikzcd}
\end{equation}


\noindent where the $\sigma$ is coordinate-wise addition and $\varpi$ is coordinate-wise multiplication and where we adopt the normalization $\mathscr{K}_\psi = \varpi_!\sigma^*(\mathcal{L}_\psi)[2]\left(1\right)$. We have that $\mathscr{K}_\psi$ is perverse and pure of weight 0 \cite{Del, KatzGauss}. 
(Notice that this is also the normalization given by \ref{GammaTorus}.) 
We consider its intermediate extension to $\A^1$, which we will (somewhat abusively) also call $\mathscr{K}_\psi$. We have that the corresponding function $\textrm{Fun}(\mathscr{K})[x] = q^{-1}\Kl(x)$. Then, letting $\Sigma: \A^2 \to \A^1$ be the sum of coordinates (which is smooth), $\Sigma^*(\mathscr{K}_\psi)[1]\left(\frac{1}{2}\right)$ is a perverse sheaf on $\A^2$, 
which furthermore agrees with $\Phi_{\underline{\lambda},T, \psi}$ when we pull back along $T \to \A^2$, $\text{diag}(a, b, c) \mapsto (A,B) = (a, ab)$. 

Thus applying the principle of perverse continuation \cite{NgoPCMI} we find that $\Sigma^*(\mathscr{K}_\psi)[1]\left(\frac{1}{2}\right)$ is our sheaf $\mathfrak{J}$ on the Wang Monoid $\A^2$. From this we deduce the formula for the function $J$:

\[
J(A,B) = -q^{-\frac{3}{2}} \Kl(A+B).
\]


\noindent Incorporating the Tate twist and dimension shift in (\ref{TransOp}), which (since $\dim \,\SL_3/U = 5$, $\dim \, T = 2$) introduce an additional factor of $-q^{-\frac{5-2}{2}} = -q^{\frac{3}{2}}$, we find that the Fourier transform is given by 

\begin{equation}\label{sl3opFour}
\mathcal{F}(f)(y) = q^{-3}\sum_{x \in X(\mathbb{F}_q)} f(x)K(x,y),
\end{equation}


\noindent where $K(x,y) = \Kl\left(\langle u, v^{\vee} \rangle + \langle v, u^{\vee}\rangle\right)$ (adopting as before, the notation $x = (u,u^{\vee})$ and $y = (v, v^{\vee})$).

We conclude that our Fourier transform for $\SL_3$ and opposite Borels is simply the Fourier transform (\ref{FourKl}) for a 3-dimensional space $V$. Thus the involutivity property for special functions on $\overline{\SL_3/U}$ follows from Theorem \ref{KlInv}.

\subsection{Normalized Intertwining Operators for \texorpdfstring{$\SL_3(\F_q)$}{SL3(Fq)}}\label{RFC} We will now discuss how our Fourier transform for opposite Borels in combination with the standard Braverman-Kazhdan Fourier transform for adjacent Borels give rise to normalized intertwining operators on $\SL_3(\F_q)$.

\subsubsection{The Geometry of Braverman-Kazhdan Pairings for \texorpdfstring{$\SL_3$}{SL3}} We reproduce, as a visual aid, Makisumi's image \cite{Mak} for the spherical apartment of $\SL_3$. Recall that each facet corresponds to a different standard parabolic; the corresponding parabolic has been superimposed on the image. The open 60-degree cones (the Weyl chambers) correspond to Borels, and Borels are adjacent if the corresponding Weyl chambers share a wall.

\begin{figure}[t]
\includegraphics[width=80mm]{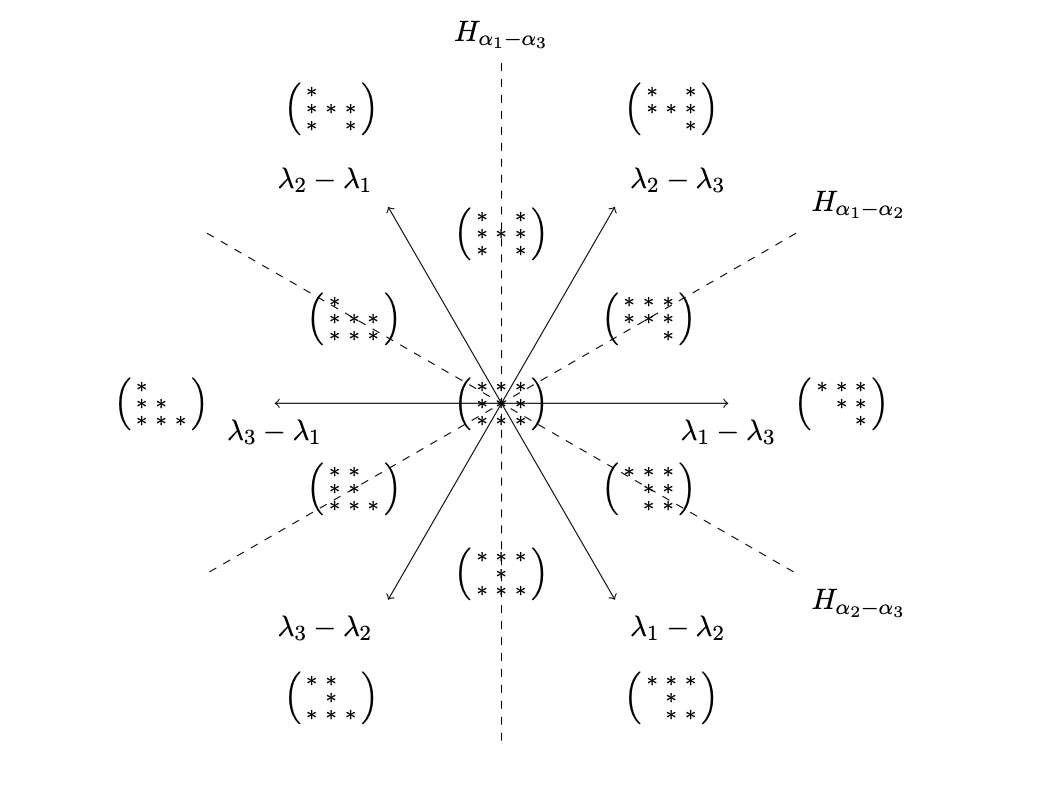}
\centering
\caption{The Spherical Appartment and Standard Parabolics for $\SL_3$  \cite{Mak}.}
\end{figure}


Now we will examine the Braverman-Kazhdan pairing for adjacent Borels in $\SL_3$. Let $B'$ be the Borel corresponding to the Weyl chamber containing $\lambda_2 - \lambda_3$ in the illsutration above. Let $X' = \overline{G/U(B')}$. We see that $X' = \{(v^*, v) \in V^* \times V : \langle v^*, v\rangle = 0\}$, with the map 

\[
G/U(B') \to X'
\]
\[
g U(B') \mapsto (\text{row 3 of } g^{-1},\text{ column 2 of }g)
\]
\[
   \left(\begin{matrix}
        a & b & c\\
        d & e & f\\
        g & h & i
    \end{matrix}\right)\left(\begin{matrix}
        1 & 0 & *\\
        * & 1 & *\\
        0 & 0 & 1
    \end{matrix}\right)  \mapsto\left(\left(\begin{matrix}
        dh-ge & gb-ah & ae-db
    \end{matrix}\right), \left(\begin{matrix}
        b\\
        e\\
        h
    \end{matrix}\right)\right).
\]


\noindent Now both $G/U(B)$ and $G/U(B')$ have natural maps to $G/[P,P]$, where $P$ is the 2+1 parabolic (in fact, $[P,P]$ is the subgroup of $G$ generated by $U(B)$ and $U(B')$). Thus there are maps between the corresponding affine closures; these are given by

\[
    X \to \A^3
\]
\[
    (v, v^{\vee}) \mapsto v^\vee
\]


\noindent and 

\[
    X' \to \A^3
\]
\[
    (v, v^{\vee}) \mapsto v^\vee.
\]


\noindent Hence we may consider the varieties

\begin{equation}\label{BarInc}
\overline{G/U(B)} \times_{\overline{G/[P,P]}} \overline{G/U(B')} = \{(v, v^{\vee}, w) \in V \times V^* \times V : \langle v, v^{\vee} \rangle = 0 \text{ and }\langle w, v^{\vee} \rangle =0\},
\end{equation}


\noindent and 

\begin{equation}\label{AffClTwid}
\widetilde{G/U(B)} \times_{G/[P,P]} \widetilde{G/U(B')} = \{(v, v^{\vee}, w) \in V \times V^* \times V : \langle v, v^{\vee} \rangle = 0, \,\langle w, v^{\vee} \rangle =0, \, v^\vee \ne 0\}.
\end{equation}


\noindent This latter space is a product of two rank-two vector bundles over $G/[P,P]$, and it possesses the Braverman-Kazhdan pairing

\begin{equation}\label{dualitysl3}
\langle-,-\rangle: \widetilde{G/U(B)} \times_{G/[P,P]} \widetilde{G/U(B')} \to \A^1.
\end{equation}


\noindent Explicitly, this is given as follows. Observe that $v \wedge w \in \bigwedge^2 (V) \cong V^*$. (We make this isomorphism canonical by insisting that $\bigwedge^3V$ has chosen basis $e_1 \wedge e_2 \wedge e_3$, giving us a specified isomorphism $\bigwedge^3V \cong \A^1$. Here the $e_i$ are the standard basis vectors of $V = \A^3$.) Since $v$ and $w$ both lie in $\ker(v^\vee)$, we see that $v \wedge w = \lambda v^\vee$. Then the map $(v,, v^{\vee}, w) \mapsto \lambda$ defines the map (\ref{dualitysl3}). When we identify all the vector spaces as $k^3$ with the standard dot product representing vector-covector evaluation, we may write the Braverman-Kazhdan pairing as:

\begin{equation}
(v, v^{\vee}, w) \mapsto \frac{v \times w}{v^\vee}
\end{equation}


\noindent where $\times$ means the classical 3-dimensional cross product. The ratio is meaningful because $v \times w$ and $v^\vee$ are, as we have seen, proportional.

Observe that the Braverman-Kazhdan Pairing (\ref{dualitysl3}) does \textit{not} algebraically extend to (\ref{BarInc}): the ratio $\lambda$ cannot be defined if $v^\vee =0$. This is why extending even the ``adjacent" Braverman-Kazhdan transformation to all functions on $\overline{G/U}(\F_q)$ is not so straightforward.

\subsubsection{Restricted Function Spaces and Composition of Transforms}\label{CompRes} We want to constrain our functions $f$ so that we may compose adjacent Braverman-Kazhdan transforms.

Firstly: we observe that the Braverman-Kazhdan transform corresponding the the flip across the $H_{\alpha_1 -\alpha_2}$ hyperplane in the spherical apartment, is defined on our model of $\overline{\SL_3/U}$ by:

\begin{equation}\label{FBKDef}
\mathcal{F}_{\text{BK}}(f) (v,v^{\vee}) = \sum_{u \in (v^{\vee})^{\perp}} f(u,v^{\vee})\psi\left(\frac{u \times v}{v^{\vee}}\right).
\end{equation}


\noindent We see that we must have 

\begin{equation}\label{v0}
f(v, 0) = 0
\end{equation}


\noindent if the transform is even to be well-defined. (Setting $f(v,0) = 0$ allows us to ignore the terms where we would have a division by 0 problem.) Similarly, if we want to be able to define the BK Fourier transform corresponding to the flip about $H_{\alpha_2 - \alpha_3}$, we will need to have

\begin{equation}\label{vee0}
    f(0, v^{\vee}) = 0
\end{equation}


\noindent for all $v^{\vee}$. Observe that these two constraints mean that $f$ is entirely supported on $G/U \subset \overline{G/U}$.

Next we demand that $\mathcal{F}_{\text{BK}} (f)$ vanish on the set $\{(v, v^{\vee}) : v = 0\}$; we see that:

\[
0 = \mathcal{F}_{\text{BK}}(f) (0,v^{\vee}) = \sum_{u \in (v^{\vee})^{\perp}} f(u,v^{\vee})\psi\left(\frac{u \times 0}{v^{\vee}}\right) =  \sum_{u \in (v^{\vee})^{\perp}} f(u,v^{\vee}).
\]


\noindent Hence 

\begin{equation}\label{const1}
    \sum_{u \in (v^{\vee})^{\perp}} f(u,v^{\vee}) = 0
\end{equation}


\noindent for all $v^{\vee}$. Likewise, considering the $H_{\alpha_2-\alpha_3}$ flip, we see that we want 

\begin{equation}\label{const2}
    \sum_{v^{\vee} \in u^{\perp}} f(u,v^{\vee}) = 0
\end{equation}


\noindent for all $u$.

We define our restricted function space $\mathcal{S}'$ as follows:

\begin{align}\label{S'Ddef}
    \mathcal{S}'(X(\F_q), \C) = \Biggl\{f : \forall& \, v, v^{\vee} \text{ with }\langle v, v^{\vee}\rangle  = 0,
      \sum_{\lambda \in \F_q^{\times}} f(\lambda v, v^{\vee}) = 0, \\
     \nonumber&\sum_{\lambda \in \F_q^{\times}} f(v, \lambda v^{\vee}) =0, \text{ and }
     \ \sum_{\lambda \in \F_q^{\times}} f(\lambda v, \lambda v^{\vee})=0 \Biggr\}.
\end{align}


\noindent Functions which are regular monodromic in the sense of \cite{BravermanPolishchuk1998} satisfy these $\G_m$-averaging properties. Observe that this automatically implies (\ref{v0}) and (\ref{vee0}). Moreover, we see that these constraints also imply (\ref{const1}) and (\ref{const2}). In fact, the constraints imposed by $\mathcal{S}'$ are somewhat stronger than those given by (\ref{v0}), (\ref{vee0}),  (\ref{const1}) and (\ref{const2}). It is possible we may be over-constraining $\mathcal{S}'$; however, we will eventually use the full force of the constraints on $\mathcal{S}'$ in comparing compositions of BK transforms to the Kloosterman transform. Finally, observe that $\mathcal{F}_{\text{BK}}$ does indeed preserve the space $\mathcal{S}'$. For this all these reasons, we have no obstruction to composing BK transforms.

We now wish to compose several of these BK transforms and compare it with our Kloosterman transform for opposite Borels. To simplify notation, we will let $c_i$ represent the $i$th column vector of a matrix $g$ in $\SL_3$, and let $\overline{r}_i$ represent the $i$th row vector of $g^{-1}$. We will compose the three flips along the upper 180-degree arc in the spherical apartment illustrated above, going from the upper-triangular Borel to the lower-triangular. We first tabulate the invariants for each $G/U_w$ we will encounter en route:


\begin{center}
\begin{tabular}{||c c c c||} 
 \hline
$\left.\SL_3 \middle/\left(\begin{matrix}
     1 & * & *\\
     0 & 1 & *\\
     0 & 0 & 1
 \end{matrix}\right),\right.$ &$\left.\SL_3 \middle/\left(\begin{matrix}
     1 & 0 & *\\
     * & 1 & *\\
     0 & 0 & 1
 \end{matrix}\right),\right.$ & $\left.\SL_3 \middle/\left(\begin{matrix}
     1 & 0 & 0\\
     * & 1 & *\\
     * & 0 & 1
 \end{matrix}\right),\right.$ & $\left.\SL_3 \middle/\left(\begin{matrix}
     1 & 0 & 0\\
     * & 1 & 0\\
     * & * & 1
 \end{matrix}\right)\right.$ \\ 
 \hline\hline
 $c_1$ & $c_2$ & $c_2$ & $c_3$ \\ 
 \hline
 $\overline{r_3}$ &  $\overline{r_3}$ &  $\overline{r_1}$ &  $\overline{r_1}$ \\
 \hline
\end{tabular}
\end{center}


Therefore, we may write the composite transform as:

\begin{equation}\label{TotTrans}
\mathcal{F}_{\text{comp}}(f)(c_3, \overline{r_1}) =
\sum_{\overset{c_1}{c_1\cdot \overline{r_3}=0}}
\sum_{\overset{\overline{r_3}}{c_2\cdot \overline{r_3}=0}}
\sum_{\overset{c_2}{c_2\cdot \overline{r_1}=0}}
f(c_1, \overline{r_3})
\psi\left(\frac{c_1 \times c_2}{\overline{r_3}}\right)
\psi\left(\frac{\overline{r_3} \times \overline{r_1}}{c_2}\right)
\psi\left(\frac{c_2 \times c_3}{\overline{r_1}}\right),
\end{equation}


\noindent where the vectors over which we are summing are presumed to be nonzero. We want to compare this with the Kloosterman transformation:

\begin{equation}\label{KlooTrans}
\mathcal{F}(f)(c_3, \overline{r_1}) : = \sum_{(c_1, \overline{r_3}) \in X(\F_q)} f(c_1, \overline{r_3}) K[(c_1, \overline{r_3}), (c_3, \overline{r_1})]
\end{equation}


\noindent where $K[(c_1, \overline{r_3}), (c_3, \overline{r_1})]:= \Kl(c_1 \cdot \overline{r_1} + c_3 \cdot \overline{r_3})$.


\begin{prop}\label{SumCompat}
    For all $f \in \mathcal{S}'$, we have

    \[
    \mathcal{F}_{\text{comp}}(f) = \mathcal{F}(f).
    \]
\end{prop}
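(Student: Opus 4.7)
The plan is to apply Fubini to the composite sum (\ref{TotTrans}), evaluating the sum over $c_2$ \emph{last} with $(c_1, \overline{r_3}) \in X(\F_q)$ held fixed. The first two $\mathcal{S}'$-constraints imply $f$ vanishes when $c_1 = 0$ or $\overline{r_3} = 0$, so I may assume both are nonzero; the BK pairings further require $c_2 \ne 0$. The analysis then splits according to whether $\overline{r_3}$ and $\overline{r_1}$ are linearly independent.

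In the \emph{generic case} ($\overline{r_3}, \overline{r_1}$ independent), the constraints $c_2 \perp \overline{r_3}$ and $c_2 \perp \overline{r_1}$ cut out the line spanned by $\overline{r_3} \times \overline{r_1}$, so I parametrize $c_2 = t(\overline{r_3} \times \overline{r_1})$ with $t \in \F_q^\times$. Three applications of the BAC-CAB identity, using $c_1 \cdot \overline{r_3} = 0$ and $c_3 \cdot \overline{r_1} = 0$, give
\[
\frac{c_1 \times c_2}{\overline{r_3}} = t(c_1 \cdot \overline{r_1}), \quad \frac{\overline{r_3} \times \overline{r_1}}{c_2} = t^{-1}, \quad \frac{c_2 \times c_3}{\overline{r_1}} = t(c_3 \cdot \overline{r_3}),
\]
so summing the product of the three $\psi$-factors over $t \in \F_q^\times$ gives exactly $\Kl(c_1 \cdot \overline{r_1} + c_3 \cdot \overline{r_3})$, matching the Kloosterman kernel in (\ref{KlooTrans}).

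In the \emph{degenerate case} ($\overline{r_3} = \lambda \overline{r_1}$ with $\lambda \in \F_q^\times$), the middle factor equals $\psi(0) = 1$, while the outer two factors combine into $\psi\bigl(\tfrac{(c_1 - \lambda c_3) \times c_2}{\lambda \overline{r_1}}\bigr)$, a linear character of $c_2 \in \overline{r_1}^\perp$. A standard character-sum evaluation gives $q^2 - 1$ when the character is trivial (equivalently $c_1 = \lambda c_3$) and $-1$ otherwise; meanwhile the Kloosterman kernel in this case is $\Kl(0) = -1$ (since $c_1, c_3 \in \overline{r_1}^\perp$ force both $c_1 \cdot \overline{r_1}$ and $c_3 \cdot \overline{r_3} = \lambda(c_3 \cdot \overline{r_1})$ to vanish). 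Hence the pointwise difference is $q^2$ at $c_1 = \lambda c_3$ and $0$ elsewhere, yielding
\[
\mathcal{F}_{\text{comp}}(f)(c_3, \overline{r_1}) - \mathcal{F}(f)(c_3, \overline{r_1}) = q^2 \sum_{\lambda \in \F_q^\times} f(\lambda c_3, \lambda \overline{r_1}),
\]
which vanishes by the third defining constraint of $\mathcal{S}'$ applied to $(v, v^\vee) = (c_3, \overline{r_1})$. (If $c_3 = 0$, each summand vanishes individually by the first $\mathcal{S}'$-constraint.)

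The main obstacle is the degenerate-case bookkeeping: the two subcases $c_1 = \lambda c_3$ versus $c_1 \ne \lambda c_3$ must be combined with the identity $\Kl(0) = -1$ in such a way that the off-diagonal contributions cancel, leaving exactly the scaling sum that the third $\mathcal{S}'$-constraint is engineered to annihilate. The first two $\mathcal{S}'$-constraints are tacitly used to restrict to the nonzero ranges where the BK pairings make sense, and the entire argument demonstrates how the $\mathcal{S}'$-conditions are essentially the minimal ones needed to repair the finite-field discrepancy between iterated adjacent BK transforms and the opposite Kloosterman transform.
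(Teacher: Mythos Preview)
Your proof is correct, and the generic case (using the BAC--CAB identity to collapse the $c_2$-sum to a Kloosterman sum) matches the paper's argument exactly. The degenerate case, however, is handled differently. The paper shows that the boundary contributions to $\mathcal{F}_{\text{comp}}(f)$ and $\mathcal{F}(f)$ vanish \emph{separately}: for $\mathcal{F}_{\text{comp}}$ it substitutes $c_1 \to \mu c_1$ to factor out $\sum_\mu f(\mu c_1, \mu \overline{r_1})$ (third constraint), and for $\mathcal{F}$ it uses that the Kloosterman kernel becomes independent of $\mu$ to factor out $\sum_\mu f(c_1, \mu \overline{r_1})$ (second constraint). You instead compute the \emph{difference} of the two boundary sums directly, evaluating the character sum over $c_2 \in \overline{r_1}^\perp \setminus \{0\}$ against $\Kl(0) = -1$ to obtain a single residual term $q^2 \sum_\lambda f(\lambda c_3, \lambda \overline{r_1})$, which the third constraint kills. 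Your route is a bit more economical in that it invokes only the diagonal-scaling constraint in the final step (the first two constraints enter only to justify restricting to $c_1, \overline{r_3} \ne 0$), whereas the paper's route makes the role of each constraint more transparent by exhibiting the two boundary pieces as individually zero.
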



\begin{proof}
If we fix $c_1, \overline{r_3}$ (and of course $c_3$ and $\overline{r_1}$ are fixed to begin with!), we may find the total coefficient of $f(c_1, \overline{r_3})$ in the sum (\ref{TotTrans}):

\[
\sum_{\overset{c_2}{c_2 \in \overline{r_3}^{\perp}\cap\overline{r_1}^{\perp}}}
\psi\left(\frac{c_1 \times c_2}{\overline{r_3}}\right)
\psi\left(\frac{\overline{r_3} \times \overline{r_1}}{c_2}\right)
\psi\left(\frac{c_2 \times c_3}{\overline{r_1}}\right).
\]


\noindent If $\overline{r_3}$ and $\overline{r_1}$ are not proportional, then $c_2$ lives in the 1-dimensional subspace $\overline{r_1}^{\perp} \cap \overline{r_3}^{\perp}$: in fact, it is precisely the subspace generated by $\overline{r_3} \times \overline{r_1}$. Let $c_2 = \lambda \cdot \overline{r_3} \times \overline{r_1}$. We now reveal the centerpiece of our argument -- Lagrange's formula for the classical cross-product:

\begin{equation}
    \textbf{a} \times (\textbf{b} \times \textbf{c}) = (\textbf{a} \cdot \textbf{c})\textbf{b} - (\textbf{a}\cdot\textbf{b})\textbf{c}.
\end{equation} 


\noindent This shows that $c_1 \times c_2 = \lambda (c_1 \cdot \overline{r_1})$, since we assume that $c_1 \cdot \overline{r_3} = 0$. Likewise, $c_2 \times c_3 = \lambda(c_3 \cdot \overline{r_3})$. We manifestly have $\frac{\overline{r_3} \times \overline{r_1}}{c_2} = \lambda^{-1}$, so we find that the coefficient of $f(c_1, \overline{r_3})$ is:

\[
\sum_{\lambda \in \F_q^{\times}} \psi( [c_1 \cdot \overline{r_1} + c_3 \cdot \overline{r_3}]\lambda + \lambda^{-1} ) = \Kl(c_1 \cdot \overline{r_1} + c_3 \cdot \overline{r_3}).
\]


\noindent This is precisely what we want: ``generically", the transform $\mathcal{F}_{\text{comp}}$ resulting from the composite of the three BK transforms gives the same kernel as our Kloosterman Fourier transform. More precisely, if $\overline{r_3}$ and $\overline{r_1}$ are independent, then the total coefficient of $f(c_1, \overline{r_3})$ in the expansion of $\mathcal{F}_{\text{comp}}(f)$ in (\ref{TotTrans}) is exactly $K((c_1, \overline{r_3}), (c_3, \overline{r_1})$, the coefficient in its expansion in (\ref{KlooTrans}). 

We have shown that, after amalgamating all the summands with a factor of $f(c_1, \overline{r_3})$ in (\ref{TotTrans}), we will obtain a sum that ``generically" matches (\ref{KlooTrans}) term-wise. We must now examine the remaining ``boundary" terms. We will show that the two ``boundary" sums agree (though only ``in totality" -- not necessarily term-wise). More precisely, we will show that the sum of all the terms in (\ref{TotTrans}) and (\ref{KlooTrans}) for which $\overline{r_3}$ and $\overline{r_1}$ are dependent is, in both cases, 0. This is where we make critical use of the assumption that $f$ belongs to $\mathcal{S}'$.

Let us consider the remaining ``boundary terms" of the sum (\ref{TotTrans}). In this case, $\overline{r_1}$ and $\overline{r_3}$ are proportional, and $c_2$ lives in the 2-dimensional space $\overline{r_1}^{\perp} = \overline{r_3}^{\perp}$. Let $\overline{r_3} = \mu \overline{r_1}$. We see that the sum of such terms is:

\[
\sum_{\overset{c_1}{c_1\cdot \overline{r_1}=0}}
\sum_{\mu \in \F_q^{\times}}
\sum_{\overset{c_2}{c_2\cdot \overline{r_1}=0}}
f(c_1, \mu\overline{r_1})
\psi\left(\frac{c_1 \times c_2}{\mu\overline{ r_1}}\right)
\psi\left(\frac{\mu\overline{r_1} \times \overline{r_1}}{c_2}\right)
\psi\left(\frac{c_2 \times c_3}{\overline{r_1}}\right)
\]


\noindent The second $\psi$ factor disappears because $\overline{r_1} \times \overline{r_1} \ = 0$. And now we may apply the substitution $c_1 \to \mu c_1$, which gives us

\[
\sum_{\overset{c_1}{c_1\cdot \overline{r_1}=0}}
\sum_{\mu \in \F_q^{\times}}
\sum_{\overset{c_2}{c_2\cdot \overline{r_1}=0}}
f(\mu c_1, \mu\overline{r_1})
\psi\left(\frac{c_1 \times c_2}{\overline{ r_1}}\right)
\psi\left(\frac{c_2 \times c_3}{\overline{r_1}}\right)
\]


\noindent Since the last two factors are independent of $\mu$, we find the whole expression is 0 because $f \in \mathcal{S}'$. Hence for $f \in \mathcal{S}'$, the total sum of those terms in (\ref{TotTrans}) for which $\overline{r_3}$ is proportional to $\overline{r_1}$ is 0.

Now we examine the total sum of those terms in (\ref{KlooTrans}) for which $\overline{r_3}$ is proportional to $\overline{r_1}$. This is:

\begin{align*}
    &\sum_{c_1 \in \overline{r_1}^{\perp}}\sum_{\overline{r_3}= \mu \overline{r_1}} f(c_1, \overline{r_3})  K[(c_1, \overline{r_3}), (c_3, \overline{r_1})]\\
    &= \sum_{c_1 \in \overline{r_1}^{\perp}}\sum_{\lambda, \mu \in \F_q^{\times}} 
    f(c_1, \mu \overline{r_1})  
    \psi[\lambda( c_1 \cdot \overline{r_1} + \mu c_3 \cdot \overline{r_1}) + \lambda^{-1}].
\end{align*}


\noindent But $c_3\cdot \overline{r_1} = 0$ by assumption, whence the above becomes:

\[
\sum_{c_1 \in \overline{r_1}^{\perp}}\sum_{\mu \in \F_q^{\times}} 
    f(c_1, \mu \overline{r_1})  
    \cdot \Kl(c_1 \cdot \overline{r_1}) = 0,
\]


\noindent since $f \in \mathcal{S}'$.

Thus the total sum of those terms in (\ref{KlooTrans}) for which $\overline{r_3}$ is proportional to $\overline{r_1}$ is 0, and so agrees with the total sum of such terms in (\ref{TotTrans}). Since we have already matched up the other terms in the two summations, the proposition follows.

\end{proof}

Putting Proposition \ref{SumCompat} together with the involutivities of $\mathcal{F}_{BK}$ and our Kloosterman $\mathcal{F}$, we may now state the following theorem:

\begin{thm}
    Let $G = \SL_3$, $T$ the standard torus. Let $\mathcal{B}_T$ denote the set of (six) Borels of $G$ cotaining $T$. For each $B \in \mathcal{B}_T$, we note that that $G/U(B)$ is a $G \times T$-variety. 
    
    For each Borel $B \in \mathcal{B}_T$, let $(\Phi^{\vee})^+_B$ denote the collection of positive coroots for $B$. For each coroot $\alpha^{\vee}$, let $T_{\alpha^\vee}$ denote the corresponding 1-dimensional subtorus. Let us define the space
    
    \[
    \mathcal{S}'(\overline{SL_3/U(B)}(\F_q), \C)
    \]

    
    \noindent as the collection of $\C$-valued functions on $\overline{G/U(B)}(\F_q)$ whose average under the right action of $T_{\alpha^{\vee}}$  is 0 for each $\alpha^{\vee} \in(\Phi^{\vee})^+_B$. 
    
    i) The transforms $q^{-1}\mathcal{F}_{BK_{B',B}}: \mathcal{S}'(\overline{SL_3/U(B)}(\F_q) \to \mathcal{S}'(\overline{SL_3/U(B')}(\F_q)$  between adjacent Borels $B$ and $B'$ induced by fiberwise Fourier transforms (and explicitly defined in \ref{FBKDef}) generate a family of normalized intertwining operators between any pair of $B$, $B' \in \mathcal{B}_T$.
    
    ii) Moreover, the intertwiner for opposite Borels agrees with $q^{-3}\mathcal{F}$, where $\mathcal{F}$ is defined as in (\ref{FourSlip}). 
    \end{thm}

    Viewing everything as functions on the set $X(\F_q)$, for the variety

    \[
    X = \{(v, v^{\vee}) \in V \times V^* : \langle v, v^{\vee}\rangle = 0\},
    \]
    

    \noindent where $V = \A^3$, we may rewrite Theorem 7.2 as a single $S_3$-action: 
    
    \begin{thm} There exists a $G$-equivariant action of $S_3 = W_T(\SL_3)$ on the space $\mathcal{S}'(X(\F_q), \C)$ as defined in (\ref{S'Ddef}); the fnite-field analogue of the the Gelfand Graev action. This action is $G$-equivariant, and $T$-equivariant under the right $T$-action twisted by $W$. It is generated by the two transformations:

    \begin{equation}
    \mathcal{F}_{\alpha_1-\alpha_2}(f)(v,v^{\vee}) = q^{-1}\sum_{u  \in (v^{\vee})^{\perp}} f(u,v^{\vee})\psi\left(\frac{u \times v}{v^{\vee}}\right)
    \end{equation}


     \noindent and 

      \begin{equation}
    \mathcal{F}_{\alpha_2-\alpha_3}(f)(v,v^{\vee}) = q^{-1}\sum_{u^{\vee}  \in v^{\perp}} f(v,u^{\vee})\psi\left(\frac{u^{\vee} \times v^{\vee}}{v}\right)
    \end{equation}


    \noindent while the longest element of $W$ acts via the quadratic Kloosterman Fourier transform:

    \begin{equation}\label{sl3op}
\mathcal{F}_{w_0}(f)(v,v^{\vee}) = q^{-3}\sum_{(u,u^{\vee}) \in X(\mathbb{F}_q)} f(u, u^{\vee})K[(u,u^{\vee}),(v, v^{\vee})]
\end{equation}


\noindent where $K(x,y) = \Kl\left(\langle u, v^{\vee} \rangle + \langle v, u^{\vee}\rangle\right)$.

\end{thm}

\section{The case of \texorpdfstring{$\Sp_4$}{SP4}} 

In this section, we let $G = \Sp_4$. 

\subsection{Conventions} We will adopt the convention in Makisumi:

\[
\Sp_4 = \left\{ g \in \GL_4 \,: \,g^{\text{T}} \left(\begin{matrix}
     &  &  & 1\\
     &  & -1 & \\
     & 1 &  & \\
    -1 &  &  & 
\end{matrix}\right) g = \left(\begin{matrix}
     &  &  & 1\\
     &  & -1 & \\
     & 1 &  & \\
    -1 & &  & 
\end{matrix}\right) \right\}.
\]


\noindent This slightly unusual convention is superior to the usual $\left(\begin{matrix}
    0 & \Id_2\\
    -\Id_2 &  0
\end{matrix}\right)$ insofar as it permits us to let the standard Borel be given by upper triangular matrices. The maximal torus is given by matrices of the form:

\[
\left(\begin{matrix}
    t_1  & & & \\
    & t_2 & & \\
    & & t_2^{-1} & \\
    & & & t_1^{-1}
\end{matrix}\right)
\]


\noindent The projections $\alpha_1$ and $\alpha_2 : T \to \mathbb{G}_m$ give us a system of fundamental weights for $\Sp_4$; the set of roots is:

\[
\Phi = \{\pm 2\alpha_1, \, \pm 2\alpha_2, \, \pm\alpha_1 \pm \alpha_2\}
\]


\noindent while the set of coroots is given by:

\[
\Phi^{\vee} = \{\pm \lambda_1,\, \pm \lambda_2, \, \pm \lambda_1 \pm \lambda_2\}.
\]


\noindent Further details may be found in Makisumi. 

We will, throughout this section, let $\omega$ denote the symplectic form preserved by the action of $\Sp_4$:

\[
\omega\left(\left(\begin{matrix}
    a\\
    b\\
    c\\
    d
\end{matrix}\right), \left(\begin{matrix}
    a'\\
    b'\\
    c'\\
    d'
\end{matrix}\right)\right) = ad' - bc'+ cb' - da'
\]

\subsection{The Spherical Apartment}

\begin{figure}
    \includegraphics[width=80mm]{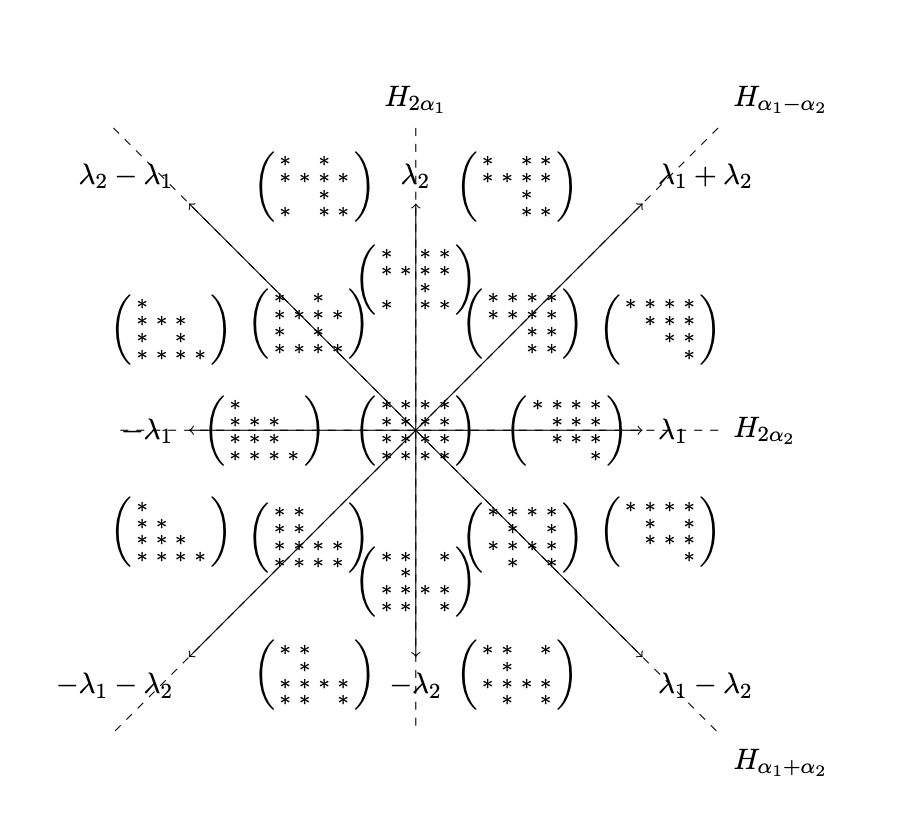}
    \centering
    \caption{The Spherical Apartment and Coroots for $\Sp_4$ \cite{Mak}.}
\end{figure}


\noindent We see that there are, up to isomorphism, 3 kinds of nontrivial parabolic subgroups:

\noindent 1) the Borel: $\left(\begin{matrix}
    * & * & * & *\\
    0 & * & * & *\\
    0 & 0 & * & *\\
    0 & 0 & 0 & *
\end{matrix}\right),$ 2) the Klingen parabolic: $\left(\begin{matrix}
    * & * & * & *\\
    0 & * & * & *\\
    0 & * & * & *\\
    0 & 0 & 0 & *
\end{matrix}\right)$ and  3) the Siegel parabolic:$\left(\begin{matrix}
    * & * & * & *\\
    * & * & * & *\\
    0 & 0 & * & *\\
    0 & 0 & * & *
\end{matrix}\right).$


\noindent We will center our attention on the Siegel Parabolic, which we shall call $P$. then $
U(P) = \left\{ \left(\begin{matrix}
    1 & 0 & * & *\\
    0 & 1 & * & *\\
    0 & 0 & 1 & 0\\
    0 & 0 & 0 & 1
\end{matrix}\right)\right\}$; note that the upper-right $2 \times 2$ matrix is actually three-dimensional, and $U(P)$ is isomorphic to the Abelian group $\A^3$: $\left(\begin{matrix}
     * & *\\
    * & *\\
\end{matrix}\right) = \left(\begin{matrix}
     a & b\\
     c & -a\\
\end{matrix}\right)$.

 The Levi $M$, which we shall consider both as a subgroup of $\Sp_4$ and as a quotient of $P$, is isomorphic to $\GL_2$, and is embedded in $\Sp_4$ via: $\left\{ \left(\begin{array}{@{}c|c@{}}
  m & 0 \\
\hline
0 & m^{-1}
\end{array}\right)\right\}.$


\subsection{Geometry of the Affine Closure} Now we shall describe the paraspherical space $\overline{G/U(P)}$. We observe there is a map:

\[
\Sp_4/U(P) \to \Mat_{4,2}
\]

\[
\left(
  \begin{array}{cccc}
    \vertbar & \vertbar &    \vertbar    & \vertbar \\
    v_{1}    & v_{2}    &  v_3 & v_{4}    \\
    \vertbar & \vertbar &   \vertbar     & \vertbar 
  \end{array}
\right)
\left(
\begin{matrix}
  1 & 0 & * & *\\
  0 & 1 & * & *\\
  0 & 0 & 1 & 0\\
  0 & 0 & 0 & 1
  \end{matrix}
 \right) \mapsto \left( \begin{array}{cc}
    \vertbar & \vertbar     \\
    v_{1}    & v_{2}         \\
    \vertbar & \vertbar 
  \end{array}\right).
\]

\noindent In the above, the $v_i$ denote column vectors. The image of this map is the set of $(v_1, v_2)$ such that $\omega(v_1, v_2) = 0$ (since they come from the first two columns of an element of $\Sp_4$), and such that $v_1$ and $v_2$ are linearly independent. The closure of this image in $\Mat_{4,2}$ drops the linear independence constraint; we let

\[
X := \{(v_1, v_2) \in \Mat_{4,2} \,: \, \omega(v_1, v_2)= 0\}
\]


\noindent denote this closure. Note that $X$ has dimension 7. Since $G/U(P)$ also has dimension $7 =10-3$, we can verify that $G/U(P) \to X$ is an isomorphism onto its image. Moreover, the complement of the this image (i.e., the complement of the rank-2 locus in $X \subset \Mat_{4,2}$) has dimension 5; codimension 2 in $X$. Thus, Hartog's Lemma implies that $X$ is isomorphic to the affine closure of $G/U(P)$. 

Observe that $X$ is a quadric hypersurface in $\A^8$; it is the affine cone over a quadric hypersurface in $\P^7$. Hence it has an isolated conical singularity at the origin.

\subsection{The Slipper Pairing} We will now consider the Slipper pairing. As we did for $G/U(P)$, we provide a model for $G/U(P^{\op})$ via:

\[
\Sp_4/U(P^{\op}) \to \Mat_{4,2}
\]

\[
\left(
  \begin{array}{cccc}
    \vertbar & \vertbar &    \vertbar    & \vertbar \\
    v_{1}    & v_{2}    &  v_3 & v_{4}    \\
    \vertbar & \vertbar &   \vertbar     & \vertbar 
  \end{array}
\right)
\left(
\begin{matrix}
  1 & 0 & 0 & 0\\
  0 & 1 & 0 & 0\\
  * & * & 1 & 0\\
  * & * & 0 & 1
  \end{matrix}
 \right) \mapsto \left( \begin{array}{cc}
    \vertbar & \vertbar     \\
    v_{3}    & v_{4}         \\
    \vertbar & \vertbar 
  \end{array}\right).
\]


\noindent Now, observe that $U(P^{\op}) \times U(P)$-invariants of $\Sp_4$ are given by:

\[
 \left(\begin{matrix}
    1 & 0 & 0 & 0\\
    0 & 1 & 0 & 0\\
    * & * & 1 & 0\\
    * & * & 0 & 1
\end{matrix}\right) 
\left(\begin{matrix}
    a & b & c & d\\
    e & f & g & h\\
    i & j & k & l\\
    m & n & o & p
\end{matrix}\right) \left(\begin{matrix}
    1 & 0 & * & *\\
    0 & 1 & * & *\\
    0 & 0 & 1 & 0\\
    0 & 0 & 0 & 1
\end{matrix}\right) = \left(\begin{matrix}
    a & b & * & *\\
    e & f & * & *\\
    * & * & * & *\\
    * & * & * & *
\end{matrix}\right).
\]


\noindent Thus the Wang monoid $\overline{M}$ of the Siegel parabolic $P$ is given by $\Mat_2 \cong \Spec 
\, k[a, b, e, f]$.

Also, observe that in $\Sp_4$:

\[
\left(\begin{matrix}
    * & * & c & d\\
    * & * & g & h\\
    * & * & k & l\\
    * & * & o & p
\end{matrix}\right)^{-1} = \left(\begin{matrix}
    -p & l & -h & d\\
    o & -k & g & -c\\
    * & * & * & *\\
    * & * & * & *
\end{matrix}\right).
\]


\noindent Thus the Slipper pairing is given by:

\[
\mathfrak{S}: G/U(P) \times G/U(P^{\op}) \to \overline{M}
\]
\[
(gU(P), hU(P^{\op})) \mapsto U^{\op}h^{-1}g U
\]
\begin{align*}
\left((v_1, v_2),(w_1, w_2)\right) \mapsto &\left(\begin{matrix}
    -\omega(v_1, w_2) & -\omega(v_2, w_2)\\
    \omega(v_1, w_1) & \omega(v_2, w_1) 
\end{matrix}\right)\\
 =&\left(\begin{matrix}
    0 & -1\\
    1 & 0
\end{matrix}\right)\left(\begin{matrix}
    \omega(v_1, w_1) & \omega(v_2, w_1)\\
    \omega(v_1, w_2) & \omega(v_2, w_2) 
\end{matrix}\right).
\end{align*}


\subsection{The Function \texorpdfstring{$J$}{J}}\label{Sp4Func} Next we compute the function $J$. The first thing we must do is diagonalize the adjoint representation of $M^{\vee}$ on $\mathfrak{u}^{\vee}$. We see that $G^{\vee} \cong \SO_5$, $M^{\vee} \cong \GL_2$, and the adjoint representation of $M^{\vee}$ on $\mathfrak{u}^{\vee}$ is $\Std \oplus \det$. The corresponding cocharacters are thus given by $\lambda_1$, $\lambda_2$ and $\lambda_1 + \lambda_2$. We see that the function associated to the corresponding hypergeometric sheaf on $T$ (neglecting the power of $q$ coming from the Tate twist) is:

\begin{align*}
\Psi\left(\begin{matrix}
    t_1 & 0\\
    0 & t_2
\end{matrix}\right) &= \sum_{\substack{s_1d = t_1\\s_2 d = t_2}}\psi(s_1 +s_2 + d)\\
&= \sum_{d}\psi((t_1 +t_2)/d + d)\\
&= \Kl(\tr (t))
\end{align*}


\noindent where $\Kl(a) = \sum_{d \in \F_q^{\times}}\psi(a/d + d)$ is the usual Kloosterman sum.

Applying a similar argument to the case of $\SL_3$ (see (\ref{KlShDef})), we find that if $\Tr: \Mat_{2,2} \to \A^1$ is the trace map then the perverse sheaf $\Tr^*(\mathscr{K}_\psi)[3]\left(\frac{3}{2}\right)$ which agrees with the $\gamma$-sheaf on the regular 
semisimple locus of $\GL_2 \subseteq \Mat_2$. 
Applying the principle of perverse continuation \cite{NgoPCMI}, 
we conclude that $\Tr^*(\mathscr{K}_\psi)[3]\left(\frac{3}{2}\right)$ is our sheaf $\mathfrak{J}_{\text{Mat}_2}$ 
on the Wang Monoid. Our function on $M$ is $J(m) = -q^{-\frac{5}{2}}\Kl(\tr(m))$. 

The Tate twist in the transform (\ref{TransOp}) introduces an additional factor of $-q^{-\frac{7-4}{2}} = -q^{3/2}$, since $\dim \, \Sp_4/U = 7$ and $\dim \, M = 4$. This gives a total overall factor of $q^{-4}$. We find that the function on $\overline{\Sp_4/U}\times\overline{\Sp_4/U^{\op}}$ corresponding to our Fourier kernel $\mathfrak{S}^*(\mathfrak{J})\left(\frac{n}{2}\right)[-m]$ is given by

\[
\left((v_1, v_2),(w_1, w_2)\right) \mapsto q^{-4} \Kl(\omega(v_1, w_2) + \omega(w_1, v_2)).
\]


\noindent Thus our Fourier transforms between opposite Siegel Parabolics in $\Sp_4$ reduces to the quadratic Kloosterman Fourier transform from Section 6. Involutivity (for ``special" functions) then follows from Theorem \ref{KlInv}.

\section*{Data availability}

Data sharing is not applicable to this article, as no datasets were generated or analyzed during the current study.

\section*{Competing interests}

The author has no competing interests to declare that are relevant to the content of this article.

\bibliography{refs}{}
\bibliographystyle{alpha}

\end{document}